\theoremstyle{plain}
\newtheorem{Theorem}{Theorem}
\newtheorem{Lemma}[Theorem]{Lemma}
\newtheorem{Corollary}[Theorem]{Corollary}
\newtheorem{Proposition}[Theorem]{Proposition}
\newtheorem{Conjecture}[Theorem]{Conjecture}
\theoremstyle{definition}
\newtheorem{Definition}[Theorem]{Definition}
\newtheorem{Remark}[Theorem]{Remark}
\newtheorem*{Notation}{Notation}
\newcommand{\Z}{\mathbb{Z}}
\newcommand{\N}{\mathbb{N}}
\providecommand{\customgenericname}{}
\newcommand{\newcustomtheorem}[2]{%
  \newenvironment{#1}[1]
  {%
   \renewcommand\customgenericname{#2}%
   \renewcommand\theinnercustomgeneric{##1}%
   \innercustomgeneric
  }
  {\endinnercustomgeneric}
}
\title[Transcendence criteria for multidimensional continued fractions]{Transcendence criteria for multidimensional continued fractions}
\author[F. Accossato]{Federico Accossato}
\address{Dipartimento di Scienze Matematiche "Giuseppe Luigi Lagrange", Politecnico di Torino, Corso Duca degli Abruzzi 24, 10129, Torino (TO), Italy}
\email{federico.accossato@polito.it}
\author[N. Murru]{Nadir Murru}
\address{Dipartimento di Matematica, Università di Trento, Via Sommarive 14, 38123, Povo (TN), Italy}
\email{nadir.murru@unitn.it}
\author[G. Romeo]{Giuliano Romeo}
\address{Dipartimento di Scienze Matematiche "Giuseppe Luigi Lagrange", Politecnico di Torino, Corso Duca degli Abruzzi 24, 10129, Torino (TO), Italy}
\email{giuliano.romeo@polito.it}
\subjclass[2010]{11J70; 11J81; 11Y65}
\keywords{Diophantine approximation; Continued fractions; Transcendence criteria; Jacobi's algorithm}
\begin{document}

\begin{abstract}
Classical results on Diophantine approximation, such as Roth's theorem, provide the most effective techniques for proving the transcendence of special kinds of continued fractions. Multidimensional continued fractions are a generalization of classical continued fractions, introduced by Jacobi, and there are many well-studied open problems related to them. In this paper, we establish transcendence criteria for multidimensional continued fractions. In particular, we show that some Liouville-type and quasi-periodic multidimensional continued fractions are transcendental. We also obtain an upper bound on the naive height of cubic irrationals arising from periodic multidimensional continued fractions and exploit it to prove the transcendence criteria in the quasi-periodic case.
\end{abstract}

\maketitle

\section{Introduction}
In 1844, Liouville \cite{LIO} constructed the first example of transcendental numbers by using the properties of continued fractions. Liouville proved that algebraic numbers admit only finitely many ``good" approximations and used continued fractions to construct some real numbers that are approximated ``too well" to be algebraic. These continued fractions are usually referred as \textit{Liouville-type} continued fractions, and they have unbounded partial quotients that increase very fast.
The first transcendental continued fractions with bounded partial quotients have been defined by Maillet \cite{MAI} and Baker \cite{BAK}. These are called \textit{quasi-periodic} continued fractions, as they ``mimic" the behavior of periodic continued fractions. Indeed, these continued fractions have same blocks of partial quotients that repeat infinitely many times. Quasi-periodic continued fractions are transcendental as they can be approximated too well by infinitely many quadratic irrationals. See also \cite{MB} for several improvements of the results of Maillet and Baker. These kinds of continued fractions have also been exploited in the framework of $p$-adic continued fractions to prove the transcendence of $p$-adic numbers \cite{LMS, LV, OO} (see also Section 8 of \cite{R}).

Since 2005, Adamczewski and Bugeaud followed up the work started by Maillet and Baker by proving transcendence criteria for several families of continued fractions, including \textit{stammering} continued fractions \cite{AB2}, \textit{palindromic} continued fractions \cite{AB3}, irrational \textit{automatic} numbers \cite{AB} and other continued fractions arising from well-known sequences \cite{ABD, QUE}. The main tools used in their proofs are Roth's theorem \cite{ROTH} and its generalization due to Schmidt, i.e., the Subspace Theorem \cite{SCHM}. In 2013, Bugeaud \cite{BUG2} proved a transcendence criterion for \textit{automatic} continued fractions, including all previous families. The results also imply that the continued fraction expansions of algebraic irrationals is either periodic, and hence it represents a quadratic irrational, or it can not be too ``simple". A famous conjecture states that the continued fraction of any algebraic irrational of degree greater than two contains arbitrarily large partial quotients. Another related conjecture, due to Badziahin and Shallit \cite{BS}, claims that every continued fraction where the terms form a $k$-regular sequence of positive integers is transcendental or quadratic.

Lagrange's theorem for classical continued fractions states that a real number has an eventually periodic continued fraction if and only if it is a quadratic irrational. Multidimensional continued fractions (MCF) have been introduced for the first time by Jacobi \cite{JAC} in order to answer a question posed by Hermite \cite{HER}. The aim was to obtain an analogue of Lagrange's theorem for cubic irrationals. The work of Jacobi has been lately generalized by Perron \cite{PER} to deal with algebraic irrationalities of any degree.
Hermite's problem of finding a periodic continued fraction for algebraic irrationals of degree greater than two is still open (see \cite{OK1, OK2} for some recent progresses).

In this paper, we start to develop the analysis of transcendental continued fractions of higher dimension. We prove several transcendence criteria for general multidimensional continued fractions, mainly inspired by the results of Liouville and the works of Maillet and Baker. More precise results are obtained in the bidimensional case of Jacobi's algorithm. To our knowledge, the only works related to this topic so far are due to Tamura \cite{TAM1,TAM2}, where only very specific MCF expansions were considered. The methodology developed in this paper aims to provide a better understanding of the multidimensional continued fractions of algebraic and transcendental numbers. Some of the results are interesting on their own and can be read independently of the rest of the paper. In particular, Theorem \ref{Thm: height} provides a bound on the height of cubic irrationals corresponding to periodic MCFs.

In Section \ref{Sec: multidim}, we review the definition of Jacobi--Perron's algorithm and the basic known properties. 
In Section \ref{Sec: aux}, we prove some new results for MCFs that are exploited for proving the results of the next sections.
In Section \ref{Sec: bounds}, we establish a bound on the height of cubic irrationals having a periodic Jacobi's MCF. In Section \ref{Sec: liouville} and Section \ref{Sec: quasiper}, we prove the transcendence of, respectively, \textit{Liouville-type} and \textit{quasi-periodic} MCF. We also provide an estimate for the growth of the denominators of convergents for an MCF. Section \ref{Sec: openprob} is devoted to some open questions.


\section{Multidimensional continued fractions}\label{Sec: multidim}
In this section we recall some basic facts about multidimensional continued fractions and fix the notation. For the general theory, we refer the interested readers to \cite{BER, MUL}. We first introduce Jacobi's algorithm and then we recall its generalization due to Perron.
\subsection{Jacobi's algorithm}\label{Sec: jacobi}
Given $\alpha_0, \beta_0 \in \mathbb R$, Jacobi's algorithm provides two sequences of integers by means of the following recurrences:
\begin{equation} \label{alg-jac} \begin{cases}  a_n = \lfloor \alpha_n \rfloor \cr b_n = \lfloor \beta_n \rfloor \cr \alpha_{n+1} = \cfrac{1}{\beta_n - b_n} \cr \beta_{n+1} = \cfrac{\alpha_n - a_n}{\beta_n - b_n} \end{cases}\end{equation}
for $n \geq 0$. 

The $a_n$'s and $b_n$'s are called \emph{partial quotients}, the $\alpha_n$'s and $\beta_n$'s are called \emph{complete quotients}. The two sequences $(a_n)_{n \geq 0}$ and $(b_n)_{n\geq 0}$ are called a \emph{multidimensional continued fraction} (MCF) and we write 
\[ (\alpha_0, \beta_0) = [(a_0, a_1, \ldots), (b_0, b_1, \ldots)] \]
for indicating that the MCF is obtained starting from $\alpha_0$ and $\beta_0$.

\begin{Remark}\label{Rem:interruptions}
    Note that the algorithm can be performed as long as the $\beta_n$'s are not integers. This is generally referred to as the \textit{simple case}, where two infinite sequences of integers are provided.
    
    If this is not the case, let $n_0$ be the first index such that $\beta_{n_0} \in \mathbb{Z}$. Then, the $a_i$'s, with $i > n_0$, are generated by evaluating the classical continued fraction of $\alpha_{n_0}$ and we say that $ (\alpha_0, \beta_0) = [(a_0, a_1, \ldots, a_{n_0}, \ldots), (b_0, b_1, \ldots, b_{n_0})]$ is the MCF for $(\alpha_0, \beta_0)$, obtained through Jacobi's algorithm.
    If the sequence of the $a_i$'s is infinite, we say that the procedure presents one interruption, while we say that two interruptions occur if both sequences of partial quotients are finite.

    Perron \cite{PER} showed that, if the algorithm admits $l$ interruptions, then there exist $l$ independent relations over $\mathbb{Z}$ occurring between $1, \alpha_0, \beta_0$. The converse holds as well \cite{DFL04}.    
\end{Remark}

From \eqref{alg-jac}, a necessary condition for two sequences of integers $(a_i)_{i\geq 0}$, $(b_i)_{i\geq 0}$ to be obtained by applying Jacobi's algorithm to a certain couple of real numbers $(\alpha_0,\beta_0)$ is that, for all $n \geq 1$,
\begin{equation} \label{Eq: conditionsJP}  
a_n \geq 1, \quad b_n \geq 0, \quad a_n \geq b_n,
\end{equation}
and $b_{n+1} \geq 1$ in the case that $a_n = b_n$. In \cite{PER}, Perron observed that such conditions are also sufficient. For this reason, they are called \textit{admissibility conditions} for the sequences of partial quotients.

Given to a MCF as before, we define the sequences $(A_n)_{n \geq -3}, (B_n)_{n \geq -3}, (C_n)_{n \geq -3}$ as follows:
\[ \begin{cases} A_{-3} = 0, \quad A_{-2} = 0, \quad A_{-1} = 1 \cr B_{-3} = 0, \quad B_{-2} = 1, \quad B_{-1} = 0 \cr C_{-3} = 1, \quad C_{-2} = 0, \quad C_{-1} = 0,\end{cases} \quad \begin{cases} A_n = a_n A_{n-1} + b_n A_{n-2} + A_{n-3} \cr B_n = a_n B_{n-1} + b_n B_{n-2} + B_{n-3} \cr C_n = a_n C_{n-1} + b_n C_{n-2} + C_{n-3}, \end{cases} \]
for all $n \geq 0$. The sequences $(A_n/C_n)_{n \geq 0}, (B_n/C_n)_{n \geq 0}$ are called \emph{convergents} of the MCF. They can be also obtained by
\begin{equation}\label{Eq: matrixform}
\begin{pmatrix} a_0 & 1 & 0 \cr b_0 & 0 & 1 \cr 1 & 0 & 0 \end{pmatrix} \begin{pmatrix} a_1 & 1 & 0 \cr b_1 & 0 & 1 \cr 1 & 0 & 0 \end{pmatrix} \cdots \begin{pmatrix} a_n & 1 & 0 \cr b_n & 0 & 1 \cr 1 & 0 & 0 \end{pmatrix} = \begin{pmatrix} A_n & A_{n-1} & A_{n-2} \cr B_n & B_{n-1} & B_{n-2} \cr C_n & C_{n-1} & C_{n-2} \end{pmatrix},
\end{equation}
for all $n \geq 0$.

We recall now some useful properties of multidimensional continued fractions. 

 In the following lemma, we see that the sequences of convergents of a MCF obtained starting from the pair of real numbers $\alpha_0$ and $\beta_0$ actually converge to them. The proof can be found, e.g, in \cite{BER}, but we report it for completeness, since our notation is slightly different and it will be used later.

\begin{Lemma}[\cite{BER}, Theorem 1]\label{Lem: convJ}
Let $(\alpha_0, \beta_0) = [(a_0, a_1, \ldots), (b_0, b_1, \ldots)]$ be an infinite MCF such that the partial quotients satisfy \eqref{Eq: conditionsJP}, then 
\[ \lim_{n \rightarrow \infty} \cfrac{A_n}{C_n} = \alpha_0, \quad \lim_{n \rightarrow \infty} \cfrac{B_n}{C_n} = \beta_0. \]
\end{Lemma}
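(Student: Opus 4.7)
I would represent $\alpha_0$ and $\beta_0$ in closed form via the matrix entries $A_k, B_k, C_k$ and the complete quotients $\alpha_{n+1}, \beta_{n+1}$, and then bound the resulting error. A direct manipulation of \eqref{alg-jac} shows that
\[
\begin{pmatrix} a_n & 1 & 0 \\ b_n & 0 & 1 \\ 1 & 0 & 0 \end{pmatrix} \begin{pmatrix} \alpha_{n+1} \\ \beta_{n+1} \\ 1 \end{pmatrix} = \alpha_{n+1} \begin{pmatrix} \alpha_n \\ \beta_n \\ 1 \end{pmatrix};
\]
iterating from $n$ down to $0$ and using \eqref{Eq: matrixform},
\[
\begin{pmatrix} A_n & A_{n-1} & A_{n-2} \\ B_n & B_{n-1} & B_{n-2} \\ C_n & C_{n-1} & C_{n-2} \end{pmatrix} \begin{pmatrix} \alpha_{n+1} \\ \beta_{n+1} \\ 1 \end{pmatrix} = D_n \begin{pmatrix} \alpha_0 \\ \beta_0 \\ 1 \end{pmatrix}, \qquad D_n := \alpha_1 \alpha_2 \cdots \alpha_{n+1}.
\]
Reading the third coordinate gives $D_n = C_n \alpha_{n+1} + C_{n-1}\beta_{n+1} + C_{n-2}$, while the first two coordinates yield closed-form expressions for $\alpha_0$ and $\beta_0$. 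Subtracting $A_n/C_n$ and simplifying then produces
\[
\alpha_0 - \frac{A_n}{C_n} = \frac{(A_{n-1} C_n - A_n C_{n-1})\beta_{n+1} + (A_{n-2} C_n - A_n C_{n-2})}{C_n D_n},
\]
and analogously for $\beta_0 - B_n/C_n$.

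It remains to show that these errors vanish. The denominator's growth is easy: from \eqref{Eq: conditionsJP} one has $C_n \geq C_{n-1} + C_{n-3}$, so $C_n$ grows at an exponential (Tribonacci-like) rate and $D_n \geq 1$. The main obstacle is to control the numerator. Setting $\epsilon_n := A_n - \alpha_0 C_n$ and $\eta_n := B_n - \beta_0 C_n$, subtracting $\alpha_0$ (resp.\ $\beta_0$) times the third-coordinate identity from the first (resp.\ second) gives the linear recurrence
\[
\epsilon_n \alpha_{n+1} + \epsilon_{n-1} \beta_{n+1} + \epsilon_{n-2} = 0,
\]
with $\epsilon_{-2} = 0$, $\epsilon_{-1} = 1$, $\epsilon_0 = a_0 - \alpha_0 \in (-1,0]$, and analogously for $\eta_n$. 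Using $\alpha_{n+1} > 1$ together with $\beta_{n+1}/\alpha_{n+1} = \alpha_n - a_n \in [0,1)$ and $1/\alpha_{n+1} = \beta_n - b_n \in [0,1)$, one should then be able to control the growth of $|\epsilon_n|$ and compare it with that of $C_n$ to conclude $|\epsilon_n|/C_n \to 0$.

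A geometrically transparent reformulation, which helps to see why such a bound ought to hold, is that dividing each of the three summands in the closed form for $\alpha_0$ by $C_n, C_{n-1}, C_{n-2}$ respectively exhibits $(\alpha_0,\beta_0)$ as a convex combination of the three consecutive convergents $(A_k/C_k, B_k/C_k)$ for $k = n-2, n-1, n$, with nonnegative weights $C_n\alpha_{n+1}/D_n$, $C_{n-1}\beta_{n+1}/D_n$, $C_{n-2}/D_n$ summing to $1$. The convergence thus reduces to showing that the diameter of this triangle of convergents shrinks to zero, which is the crux of the argument.
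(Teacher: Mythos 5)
Your setup is sound: the matrix identity, the closed form for $\alpha_0$ in terms of $A_n, A_{n-1}, A_{n-2}$ and $\alpha_{n+1},\beta_{n+1}$, the formula for the error $\alpha_0 - A_n/C_n$, and the observation that $(\alpha_0,\beta_0)$ is a convex combination of three consecutive convergents with weights $C_n\alpha_{n+1}/D_n$, $C_{n-1}\beta_{n+1}/D_n$, $C_{n-2}/D_n$ are all correct, and that last observation is essentially what the paper exploits. But the proof is not finished on either of the two paths you suggest, and the step you defer is precisely the hard part.

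For the $\epsilon_n = A_n - \alpha_0 C_n$ recurrence, you write $\epsilon_n = -(\alpha_n-a_n)\epsilon_{n-1} - (\beta_n-b_n)\epsilon_{n-2}$ with both coefficients in $[0,1)$ and say one "should then be able to control the growth." But the coefficient sum is $(\beta_{n+1}+1)/\alpha_{n+1}$, which can exceed $1$ (e.g.\ $\alpha_{n+1}=1.5$, $\beta_{n+1}=0.9$), so the naive estimate only gives $|\epsilon_n|\le|\epsilon_{n-1}|+|\epsilon_{n-2}|$, i.e.\ $|\epsilon_n|=O(\varphi^n)$ with $\varphi$ the golden ratio. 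Since the denominators $C_n$ are only guaranteed to grow like the Tribonacci-type root $\psi\approx 1.466<\varphi$, this does \emph{not} give $|\epsilon_n|/C_n\to 0$; a sharper, multi-step averaging argument is needed, and you have not supplied it. For the convex-combination path, you correctly reduce everything to showing that the diameter of the triangle of three consecutive convergents tends to $0$, and you call this "the crux of the argument" -- but then you stop. The paper's proof does exactly this crux: it writes each new convergent $A_k/C_k$ as a convex combination of the previous three with coefficients $s_k, t_k, w_k$, shows the weight $s_k = a_kC_{k-1}/C_k \ge 1/3$ on the most recent convergent, and runs a nested-intervals-plus-contraction argument (the factor $1/4$ appearing in the estimate $\frac{A_{k+3+h}}{C_{k+3+h}} - m > 4^{-(h+1)}(M-m)-\epsilon(\ldots)$) to force $M=m$. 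That quantitative contraction, resting on the lower bound $s_k\ge 1/3$ which follows from the admissibility conditions \eqref{Eq: conditionsJP}, is what is missing from your proposal.
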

\begin{proof}
We see the proof only for the convergents $(A_n/C_n)_{n \geq -2}$.
Define
\[ m_k := \min\left( \cfrac{A_k}{C_k}, \cfrac{A_{k+1}}{C_{k+1}}, \cfrac{A_{k+2}}{C_{k+2}} \right), \quad M_k := \max\left( \cfrac{A_k}{C_k}, \cfrac{A_{k+1}}{C_{k+1}}, \cfrac{A_{k+2}}{C_{k+2}} \right),\]
for all $k \geq 0$. We have
\[ \cfrac{A_{k}}{C_{k}} = \cfrac{a_k A_{k-1} + b_k A_{k-2} + A_{k-3}}{C_k} = s_k \cfrac{A_{k-1}}{C_{k-1}} + t_k  \cfrac{A_{k-2}}{C_{k-2}} + w_k  \cfrac{A_{k-3}}{C_{k-3}}, \]
where
\[ s_k := \cfrac{a_k C_{k-1}}{C_k}, \quad t_k := \cfrac{b_k C_{k-2}}{C_k}, \quad w_k := \cfrac{C_{k-3}}{C_k}, \]
for all $k \geq 0$. Observe that
\[s_k + t_k + u_k =1, \quad \cfrac{t_k}{s_k} \leq 1, \quad \cfrac{u_k}{s_k} \leq 1, \quad s_k \geq 1/3,\]
for all $k \geq 0$. Now, we have
\[ \cfrac{A_{k+3}}{C_{k+3}}\geq s_{k+3} m_k + t_{k+3} m_k + u_{k+3} m_k = m_k, \]
and similarly
\[ \cfrac{A_{k+3}}{C_{k+3}} \leq M_k, \]
for all $k \geq 0$. Moreover, since $m_{k+1} \geq m_k$ and $M_{k+1} \leq M_k$, we obtain
\[ 0 \leq m_k \leq m_{k+1} \leq M_{k+1} \leq M_k, \]
for all $k \geq 0$. Thus, the sequence $(m_k)_{k\geq0}$ is a non-decreasing sequence bounded by $M_0$ and the sequence $(M_k)_{k\geq0}$ is a non-increasing sequence bounded by $m_0$ and
\[m:=\lim_{k \rightarrow \infty} m_k, \quad M:=\lim_{k \rightarrow \infty} M_k.\]
Hence, for all $\epsilon > 0$ there exists $k_0 \in \mathbb N$ such that for all $k > k_0$, we have $m_k > m$, i.e.,
\[ \cfrac{A_k}{C_k}, \cfrac{A_{k+1}}{C_{k+1}}, \cfrac{A_{k+2}}{C_{k+2}} > m - \epsilon \]
and
\begin{align*}
\cfrac{A_{k+3}}{C_{k+3}} &> (t_{k+3} + u_{k+3})(m-\epsilon) + s_{k+3}\cdot \cfrac{A_{k+2}}{C_{k+2}} =\\
&=(1-s_{k+3})(m-\epsilon) +s_{k+3}\cdot\cfrac{A_{k+2}}{C_{k+2}} \geq m-\epsilon + s_{k+3}\left(  \cfrac{A_{k+2}}{C_{k+2}} - m\right).
\end{align*}
Hence,
\[\cfrac{A_{k+3}}{C_{k+3}} - m > s_{k+3}\left( \cfrac{A_{k+2}}{C_{k+2}} - m \right) - \epsilon > \cfrac{1}{4}\left( \cfrac{A_{k+2}}{C_{k+2}} - m \right) - \epsilon,\]
from which
\[\cfrac{A_{k+3+h}}{C_{k+3+h}} - m > \cfrac{1}{4^{h+1}}\left( \cfrac{A_{k+2}}{C_{k+2}} - m \right) - \epsilon \left(1+ \cfrac{1}{4}+ \ldots + \cfrac{1}{4^h}\right),\]
for all $h \geq 0$. Now, given $k \geq k_0$ and fixed $h \in \mathbb N$, let $k_1, k_2$ be such that $m_{k_1} = \cfrac{A_{k+3+h}}{C_{k+3+h}}$ and $M_{k_2} = \cfrac{A_{k+2}}{C_{k+2}}$. We have
\begin{eqnarray*} 0 &= m - m > m_{k_1} - m = \cfrac{A_{k+3+h}}{C_{k+3+h}} - m > \cfrac{1}{4^{h+1}}(M_{k_2}-m) - \epsilon \left(1+ \cfrac{1}{4}+ \ldots + \cfrac{1}{4^h}\right) \cr &\geq \cfrac{1}{4^{h+1}}(M-m) - \epsilon \left(1+ \cfrac{1}{4}+ \ldots + \cfrac{1}{4^h}\right). \end{eqnarray*}
From which
\[ \epsilon > \cfrac{M-m}{4^{h+1}(1 + 1/4 + \ldots 1/4^h)}, \]
and since $\epsilon$ is arbitrarily small we must have $m = M$.
\end{proof}

By Lemma \ref{Lem: convJ}, we are therefore allowed to write the following equalities.

\begin{align}\label{Eq: alpha0}
\alpha_0&=a_0+\cfrac{\beta_1}{\alpha_1}=a_0+\cfrac{b_1+\cfrac{1}{\alpha_2}}{a_1+\cfrac{\beta_2}{\alpha_2}}=a_0+\cfrac{b_1+\cfrac{1}{a_2+\ddots}}{a_1+\cfrac{b_2+\ddots}{a_2+\ddots}},\\
\beta_0&=b_0+\cfrac{1}{\alpha_1}=b_0+\cfrac{1}{a_1+\cfrac{\beta_2}{\alpha_2}}=b_0+\cfrac{1}{a_1+\cfrac{b_2+\ddots}{a_2+\ddots}}.\label{Eq: beta0}
\end{align}

To a MCF with partial quotients $(a_n)_{n\geq 0}$ and $(b_n)_{n\geq 0}$, we attach the auxiliary sequences $(\tilde{A}_n)_{n \geq -2}$, $(\tilde{B_n})_{n \geq -2}$, $(\tilde{U_n})_{n \geq -2}$, $(\tilde{\tilde{A}}_n)_{n \geq -1}$, $(\tilde{\tilde{B_n}})_{n \geq -1}$, $(\tilde{\tilde{U_n}})_{n \geq -1}$ defined as
\begin{align*}
\tilde{A}_n&=A_nC_{n-1}-C_nA_{n-1}, \quad \tilde{B}_n=B_nC_{n-1}-B_{n-1}C_{n}, \quad \tilde{U}_n = A_n B_{n-1} - A_{n-1}B_n, \\
\tilde{\tilde{A}}_n&=A_nC_{n-2}-C_nA_{n-2}, \quad \tilde{\tilde{B}}_n=B_nC_{n-2}-B_{n-2}C_{n}, \quad \tilde{\tilde{U_n}} = A_n B_{n-2} - A_{n-2} B_n.
\end{align*}
where $A_n, B_n$ and $C_n$ are numerators and denominators of the $n$-th convergent introduced above.

\begin{Lemma}\label{Lem: Recursion}
The sequences $(\tilde{A}_n)_{n\geq 0}$ and $(\tilde{B}_n)_{n\geq 0}$ satisfy
\begin{align*}
\tilde{A}_n&=-b_n\tilde{A}_{n-1}-a_{n-1}\tilde{A}_{n-2}+\tilde{A}_{n-3},\\
\tilde{B}_n&=-b_n\tilde{B}_{n-1}-a_{n-1}\tilde{B}_{n-2}+\tilde{B}_{n-3},
\end{align*}
for all $n \geq 1$, with initial conditions
\begin{align*}
\begin{cases}
\tilde{A}_{-2} = 0, \quad \tilde{A}_{-1} = 0, \quad \tilde{A}_{0} = -1, \cr  \tilde{B}_{-2} = 1, \quad \tilde{B}_{-1} = 0, \quad \tilde{B}_{0} = 0.
\end{cases}
\end{align*}
Moreover,
\[ |\tilde A_n| \leq C_n, \quad |\tilde B_n| \leq C_n, \quad |\tilde{\tilde{A}}_n| \leq C_n, \quad |\tilde{\tilde{B}}_n| \leq C_n \]
for all $n \geq 0$.
\begin{proof}
See \cite[Chapter 9]{MUL2}.
\end{proof}
\end{Lemma}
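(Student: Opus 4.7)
The proof divides into two parts: verifying the three-term recurrences, and establishing the uniform bounds. The algebra in Part 1 is routine; Part 2 is where the real difficulty lies.

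For the recurrence satisfied by $\tilde A_n$, I will start from the definition $\tilde A_n = A_n C_{n-1} - C_n A_{n-1}$ and substitute the three-term recurrences for $A_n$ and $C_n$. The $a_n$ terms cancel identically, leaving
\[
\tilde A_n = b_n(A_{n-2}C_{n-1} - C_{n-2}A_{n-1}) + (A_{n-3}C_{n-1} - C_{n-3}A_{n-1}) = -b_n \tilde A_{n-1} + (A_{n-3}C_{n-1} - C_{n-3}A_{n-1}).
\]
I then apply the same trick a second time, expanding $A_{n-1}$ and $C_{n-1}$ via the three-term recurrence at index $n-1$: the $b_{n-1}$ terms now cancel, and the two surviving brackets are recognized (up to sign) as $-a_{n-1}\tilde A_{n-2}$ and $\tilde A_{n-3}$. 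Substituting back produces the claimed recurrence. The identical computation with $B$ in place of $A$ handles $\tilde B_n$. The initial values $\tilde A_{-2},\tilde A_{-1},\tilde A_0$ and $\tilde B_{-2},\tilde B_{-1},\tilde B_0$ are then read off directly from the initial data for $A_n,B_n,C_n$.

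For the inequalities, the main obstacle is that a naive triangle inequality on the recurrence only yields $|\tilde A_n| \leq b_n C_{n-1} + a_{n-1} C_{n-2} + C_{n-3}$, which is not in general dominated by $C_n = a_n C_{n-1} + b_n C_{n-2} + C_{n-3}$: the admissibility conditions $a_n \geq b_n \geq 0$ and $a_n \geq 1$ provide no comparison between $a_{n-1}$ and $b_n$, and this upper bound can fail to lie below $C_n$ term by term. Any successful induction must therefore exploit cancellation coming from the sign pattern of $(\tilde A_n)_n$ and its companions. A parallel manipulation does give the cleaner auxiliary identity $\tilde{\tilde A}_n = a_n \tilde A_{n-1} - \tilde A_{n-2}$ (and the analogue for $\tilde{\tilde B}_n$), which in principle links the four bounds, but the sharp combinatorial estimate that finishes the argument is carried out in \cite[Chapter 9]{MUL2}. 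Since the proof of the lemma in the paper itself simply defers to this reference, I would do the same.
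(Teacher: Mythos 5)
Your proposal is correct and follows essentially the same path as the paper: the paper's proof consists entirely of a citation to \cite[Chapter 9]{MUL2}, and your Part 1 algebra (two successive expansions of the three-term recurrence, with cancellation of the $a_n$ and then $b_{n-1}$ terms) reproduces exactly the computation the authors carry out in a commented-out block before deferring to that reference. Your observations in Part 2 are also sound: the naive triangle inequality indeed does not yield $|\tilde A_n|\leq C_n$ term by term, the auxiliary identity $\tilde{\tilde A}_n = a_n\tilde A_{n-1} - \tilde A_{n-2}$ checks out, and deferring the bound to Schweiger's book is precisely what the paper does.
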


\subsection{Jacobi--Perron's algorithm}
Jacobi--Perron's algorithm generalizes Jacobi's  and works on a $m$-tuple of real numbers $(\alpha_0^{(1)},\ldots,\alpha_0^{(m)})$ and it provides the multidimensional continued fraction as the $m$-tuple of integer sequences
\[[(a_n^{(1)})_{n\geq 0},\ldots,(a_n^{(m)})_{n\geq 0}],\]
generated by the iteration of the following equations
\begin{align*}
\begin{cases}
a_n^{(i)}=\lfloor \alpha_n^{(i)} \rfloor, \ \ i=1,\ldots,m\\
\alpha_{n+1}^{(1)}=\frac{1}{\alpha_n^{(m)}-a_n^{(m)}}\\
\alpha_{n+1}^{(i)}=\frac{\alpha_n^{(i-1)}-a_n^{(i-1)}}{\alpha_n^{(m)}-a_n^{(m)}}, \ \ i=2,\ldots,m.
\end{cases}
\end{align*}
for all $n \geq 0$. Sometimes, we refer to these MCFs as $m$-dimensional continued fractions. Notice that, when $m = 2$, the algorithm reduces to Jacobi's algorithm and $\alpha_n^{(1)} = \alpha_n$, $\alpha_n^{(2)} = \beta_n$.

As in Remark \ref{Rem:interruptions}, notice that if $\alpha_{n_0}^{(m)}$ is integer for some index $n_0$, then the algorithm has an interruption, and in this case the procedure continues by taking in input the $(m-1)$-uple $(\alpha_{n_0}^{(1)},\dots,\alpha_{n_0}^{(m-1)})$. Also in this case, when one single input is given, the procedure essentially reduces to the algorithm for classical continued fractions.

 Again, \cite{DFL04} is a good reference for the analysis of the connections between the number of interruptions in the algorithm and the number of independent rational relations between $1,\alpha_0^{(i)},\dots,\alpha_0^{(m)}$. We just recall that, if $1,\alpha_0^{(i)},\dots,\alpha_0^{(m)}$ are rationally independent, then no interruption is present (i.e., the sequences of partial quotients are all infinite). The converse is not true in general.

For Jacobi--Perron's algorithm, we still have the following admissibility conditions on the partial quotients:
\begin{equation} \label{eq:cond-pq}
(a_n^{(1)}, a_{n+1}^{(2)}, \ldots, a_{n+i}^{(i+1)}) \succeq (a_n^{(m-i)}, a_{n+1}^{(m-i+1)}, \ldots, a_{n+1-i}^{(m)}, 1),
\end{equation}
for $0 \leq i \leq m-1$ and for all $n \geq 1$, where $\succeq$ denotes the lexicographic order (see, e.g., \cite{DFL04}), and 
\[a_n^{(j)} \geq 0\]
for $1 \leq j \leq m$ and for all $n \geq 1$.

Also in this case we introduce the sequence of convergents
\((A_n^{(i)}/C_{n})_{n \geq 0},\)
for all $n\in\mathbb{N}$ and for all $i=1,\ldots,m$, where the numerators $A_n^{(i)}$ are defined as follows:
\begin{align} \label{eq:conv-A}
\begin{cases}
A_{-n}^{(i)}=\delta_{in}, \quad n = 1, \ldots, m+1\\
A_n^{(i)}=\sum\limits_{j=1}^{m}a_n^{(j)}A_{n-j}^{(i)}+A_{n-m-1}^{(i)}, \quad n \geq 0,
\end{cases}
\end{align}
and the denominators $C_n$ are defined as
\begin{align}\label{eq:conv-C}
    \begin{cases}
C_{-m-1}=1,\\
C_{-n} = 0, \quad   \textup{for } n=1,\ldots, m   \\ 
C_n=\sum\limits_{j=1}^{m}a_n^{(j)}C_{n-j}+C_{n-m-1}, \quad  \text{for } n\geq 0.
\end{cases}
\end{align}

Again, it is possible to prove that, for every $ i = 1, \ldots, m$,
\begin{equation}\label{eq:lim-conv}
    \lim_{n \rightarrow \infty} \cfrac{A_n^{(i)}}{C_n} = \alpha_0^{(i)},
\end{equation}
thus allowing us to write
\begin{align*} 
\begin{cases}
\alpha_{n}^{(i-1)}=a_{n}^{(i-1)}+\frac{\alpha_{n+1}^{(i)}}{\alpha_{n+1}^{(1)}},  \ \ i=2,\ldots,m\\
\alpha_{n}^{(m)}=a_{n}^{(m)}+\frac{1}{\alpha_{n+1}^{(1)}},
\end{cases}
\end{align*}
which generalize the expressions in \eqref{Eq: alpha0} and \eqref{Eq: beta0}.
\begin{Remark}
When $m$ sequences $[(a_0^{(1)},a_1^{(1)},\dots),\dots,(a_0^{(m)},a_1^{(m)},\dots)]$ of real numbers, not necessarily integers, are given, one can always construct the convergents by using \eqref{eq:conv-A} and \eqref{eq:conv-C}. If \eqref{eq:lim-conv} is satisfied, we still say that $[(a_0^{(1)},a_1^{(1)},\dots),\dots,(a_0^{(m)},a_1^{(m)},\dots)]$ is a MCF converging to $(\alpha_0^{(1)},\dots,\alpha_0^{(m)})$.

Therefore, it can be easily proved that, if \[(\alpha_0^{(1)},\dots,\alpha_0^{(m)}) = [(a_0^{(1)},a_1^{(1)},\dots),\dots,(a_0^{(m)},a_1^{(m)},\dots)],\]
then, for every $k \geq 1$,
\begin{align}
    & (\alpha_k^{(1)},\dots,\alpha_k^{(m)})=[(a_k^{(1)},a_{k+1}^{(1)},\dots),\dots,(a_k^{(m)},a_{k+1}^{(m)},\dots)] \\
    \text{and } & (\alpha_0^{(1)},\dots,\alpha_0^{(m)}) = [(a_0^{(1)},\dots,a_{k-1}^{(1)},\alpha_k^{(1)}),\dots,(a_0^{(m)},\dots,a_{k-1}^{(m)},\alpha_k^{(m)})].
\end{align}
Moreover, for every $i = 1,\dots,m$, it holds that
\begin{equation}\label{eq:frac-alpha0}
    \alpha_0^{(i)} = \cfrac{\alpha_k^{(1)}A_{k-1}^{(i)}+\cdots +\alpha_k^{(m)}A_{k-m}^{(i)}+A_{k-m-1}^{(i)}}{\alpha_k^{(1)}C_{k-1}+\cdots +\alpha_k^{(m)}C_{k-m}+C_{k-m-1}}.
\end{equation}
\end{Remark}

Finally, suppose that $(\alpha_0^{(1)},\dots,\alpha_0^{(m)}) = [(a_0^{(1)},a_1^{(1)},\dots),\dots,(a_0^{(m)},a_1^{(m)},\dots)]$ is a MCF obtained through Jacobi--Perron's algorithm. Then, for every $k \geq 0$, we have
\begin{align}\label{Eq: idk}
\begin{split}
\begin{pmatrix} a_k^{(1)} & 1 & 0 & \cdots & 0 \cr a_k^{(2)} & 0 & 1 & \cdots & 0 \cr \vdots & \vdots & \vdots & \ddots & \vdots \cr a_k^{(m)} & 0 & 0 & \cdots & 1 \cr 1 & 0 & 0 &\cdots & 0 \end{pmatrix}\begin{pmatrix}
\alpha_{k+1}^{(1)}\cr
\alpha_{k+1}^{(2)}\cr
\vdots \cr
\alpha_{k+1}^{(m)}\cr
1
\end{pmatrix}&=\begin{pmatrix}
a_k^{(1)}\alpha_{k+1}^{(1)}+\alpha_{k+1}^{(2)}\cr
a_k^{(2)}\alpha_{k+1}^{(1)}+\alpha_{k+1}^{(3)}\cr
\vdots \cr
a_k^{(m)}\alpha_{k+1}^{(1)}+1\cr
\alpha_{k+1}^{(1)}
\end{pmatrix}= \\
&= \alpha_{k+1}^{(1)}\begin{pmatrix}
a_k^{(1)}+\frac{\alpha_{k+1}^{(2)}}{\alpha_{k+1}^{(1)}}\cr
a_k^{(2)}+\frac{\alpha_{k+1}^{(3)}} {\alpha_{k+1}^{(1)}}\cr
\vdots \cr
a_k^{(m)}+\frac{1} {\alpha_{k+1}^{(1)}}\cr
1
\end{pmatrix}
= \alpha_{k+1}^{(1)}\begin{pmatrix}
\alpha_{k}^{(1)}\cr
\alpha_{k}^{(2)}\cr
\vdots \cr
\alpha_{k}^{(m)} \cr
1
\end{pmatrix}.    
\end{split}
\end{align}
Equation \eqref{Eq: idk} leads to the following identity, which will be useful in Section \ref{Sec: bounds}:
\begin{equation}\label{Eq: pp}
\begin{pmatrix}
\alpha_0^{(1)}\cr
\vdots \cr
\alpha_0^{(m)}\cr
1
\end{pmatrix}=\frac{1}{\alpha_1^{(1)}\alpha_2^{(1)}\ldots\alpha_{k+1}^{(1)}} M_0 M_1 \cdots M_k 
\begin{pmatrix}
\alpha_{k+1}^{(1)}\cr
\vdots \cr
\alpha_{k+1}^{(m)}\cr
1
\end{pmatrix},
\end{equation}
where, for every $n$,
\begin{equation}\label{eq:M_n^m}
    M_n^{(m)} = \begin{pmatrix} a_n^{(1)} & 1 & \cdots & 0  \cr \vdots & \vdots  & \ddots & \vdots \cr a_n^{(m)} & 0 & \cdots & 1 \cr 1  & 0 &\cdots & 0 \end{pmatrix}.
\end{equation}

As we did in the previous section for Jacobi's MCF, we attach to a Jacobi--Perron's MCF with partial quotients $(a_n^{(i)})_{n\geq 0}$, for $i=1,\ldots,m$ and for all $n\in\mathbb{N}$, the auxiliary sequences
\[\tilde{A}^{(i)}_n=A_n^{(i)}C_{n-1}-A_{n-1}^{(i)}C_{n}.\]
\begin{Remark}\label{Rem: nondep}
For the sequences $\tilde{A}^{(i)}_n$ in the $m$-dimensional case we did not find an elegant recursion as for Jacobi's algorithm in Lemma \ref{Lem: Recursion}. However, we can notice that also in this case, each $\tilde{A}^{(i)}_n$ does not depend on $a_n^{(1)}$, since
\begin{align*}
\tilde{A}^{(i)}_n&=A^{(i)}_nC_{n-1}-A^{(i)}_{n-1}C_{n}=(a_n^{(1)}A^{(i)}_{n-1}+\ldots+a_n^{(m)}A^{(i)}_{n-m}+A^{(i)}_{n-m-1})C_{n-1}-\\
&\ \ \ -A_{n-1}(a_n^{(1)}C_{n-1}+\ldots+a_n^{(m)}C_{n-m}+C_{n-m-1})=\\
&=a_n^{(2)}(A_{n-2}C_{n-1}-A_{n-1}C_{n-2})+\ldots+a_n^{(m)}(A_{n-m}C_n-A_{n}C_{n-m})+\\
&\ \ \ +(A_{n-m-1}C_{n-1}-A_{n-1}C_{n-m-1}).
\end{align*}
\end{Remark}

\begin{Notation}
For the sake of simplicity, when dealing with the case $m=2$, throughout the paper we prefer to write $\alpha_n, \beta_n, a_n, b_n, A_n, B_n$, instead of $\alpha_n^{(1)}, \alpha_n^{(2)}, a_n^{(1)}, a_n^{(2)} $, $ A_n^{(1)} ,A_n^{(2)}$.
\end{Notation}

\section{Auxiliary results}\label{Sec: aux}
In this section, we prove some results that are used in the proofs of the main transcendence criteria.

\begin{Lemma} \label{lemma:approx}
Given $(\alpha_0^{(1)},\dots,\alpha_0^{(m)}) = [(a_0^{(1)},a_1^{(1)},\dots),\dots,(a_0^{(m)},a_1^{(m)},\dots)]$, then, for all $i=1,\ldots,m$, 
\[ \left|\alpha_0^{(i)}-\frac{A^{(i)}_n}{C_n}\right|<\frac{|\tilde{A}^{(i)}_{n+1}|}{C_{n+1}C_{n}},\]
for infinitely many $n \in \mathbb N$.
\end{Lemma}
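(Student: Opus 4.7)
The plan is to reinterpret the desired bound as a bracketing condition between two consecutive convergents, and then to extract such a bracketing infinitely often from the strengthened convergence statement implicit in the proof of Lemma \ref{Lem: convJ}. Using $\tilde{A}^{(i)}_{n+1} = A^{(i)}_{n+1}C_n - A^{(i)}_n C_{n+1}$ and dividing by $C_n C_{n+1}$, one gets
\[
\frac{|\tilde{A}^{(i)}_{n+1}|}{C_n C_{n+1}} \;=\; \left|\frac{A^{(i)}_{n+1}}{C_{n+1}} - \frac{A^{(i)}_n}{C_n}\right|,
\]
so the asserted inequality is equivalent to $|\alpha_0^{(i)} - A^{(i)}_n/C_n| < |A^{(i)}_{n+1}/C_{n+1} - A^{(i)}_n/C_n|$, which (for irrational $\alpha_0^{(i)}$) holds exactly when $\alpha_0^{(i)}$ lies strictly between the two consecutive convergents $A^{(i)}_n/C_n$ and $A^{(i)}_{n+1}/C_{n+1}$.

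Next, I would invoke the refined convergence fact embedded in the proof of Lemma \ref{Lem: convJ} (and in its Jacobi--Perron version, cf.\ \cite{BER}): for every $k\ge 0$, the real number $\alpha_0^{(i)}$ lies in the closed interval $[m_k^{(i)},M_k^{(i)}]$, where $m_k^{(i)}$ and $M_k^{(i)}$ denote, respectively, the minimum and the maximum of the $m+1$ consecutive convergents $A^{(i)}_k/C_k,\ldots,A^{(i)}_{k+m}/C_{k+m}$. In particular, within any such window of length $m+1$ there is at least one convergent that is $\le \alpha_0^{(i)}$ and at least one that is $\ge \alpha_0^{(i)}$.

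Then a discrete intermediate-value argument along such a window would produce two consecutive indices $n,n+1 \in \{k,\ldots,k+m\}$ for which $A^{(i)}_n/C_n$ and $A^{(i)}_{n+1}/C_{n+1}$ lie on opposite (weak) sides of $\alpha_0^{(i)}$: scanning the window one step at a time and labelling each entry as ``$\le \alpha_0^{(i)}$'' or ``$\ge \alpha_0^{(i)}$'', the label must switch between some two adjacent entries. Since $k$ may be taken arbitrarily large, this yields infinitely many such $n$, and the bracketing gives exactly the desired inequality. The only delicate point, which I expect to be the main obstacle, is the strictness of the inequality: it fails vacuously whenever $A^{(i)}_n/C_n = A^{(i)}_{n+1}/C_{n+1}$, i.e., when $\tilde{A}^{(i)}_{n+1}=0$. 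However, this degenerate situation can occur only finitely often, because otherwise the sequence of convergents would stabilise on a rational value and fail to converge to the irrational $\alpha_0^{(i)}$, so such indices can be discarded without affecting the conclusion.
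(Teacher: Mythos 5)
Your proposal follows the same route as the paper's own argument: rewrite the target inequality as the statement that $\alpha_0^{(i)}$ lies strictly between two consecutive convergents, invoke the bracketing $m_n^{(i)}\le \alpha_0^{(i)}\le M_n^{(i)}$ over windows of $m+1$ consecutive convergents (as in Lemma~\ref{Lem: convJ} and \cite{BER}), and apply a discrete intermediate-value scan within each window to produce one straddling pair; since every window of length $m+1$ produces at least one such index, the set of valid $n$ is infinite. So the core of the proof is sound and essentially identical to the paper's.

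The last paragraph, however, contains a reasoning error that you do not actually need. You claim that $\tilde A^{(i)}_{n+1}=0$ can occur only finitely often because otherwise the sequence of convergents would stabilise at a rational; this does not follow. Having $A^{(i)}_n/C_n = A^{(i)}_{n+1}/C_{n+1}$ for infinitely many $n$ is entirely compatible with the sequence still converging to an irrational limit (nothing forces equal consecutive entries to propagate), so the justification is invalid and the finiteness claim is not established. Fortunately the worry is moot: the index you extract from the intermediate-value scan is precisely one where the label changes, i.e.\ one convergent is strictly below the irrational $\alpha_0^{(i)}$ and the next strictly above it; two consecutive convergents with opposite labels are automatically distinct, so $\tilde A^{(i)}_{n+1}\neq 0$ at that index without any separate argument. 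You should delete the finiteness claim and instead note that the switching pair cannot consist of equal values.
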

\begin{proof}
Similarly as in the proof of Lemma \ref{Lem: convJ} for the case $m=2$, it is possible to see that, setting for all $i=1,\ldots,m$,
\begin{align*}
m^{(i)}_n &:= \min\left( \cfrac{A^{(i)}_n}{C_n}, \cfrac{A^{(i)}_{n+1}}{C_{n+1}},\ldots, \cfrac{A^{(i)}_{n+m}}{C_{n+m}} \right),\\
M^{(i)}_n &:= \max\left( \cfrac{A^{(i)}_n}{C_n}, \cfrac{A^{(i)}_{n+1}}{C_{n+1}},\ldots, \cfrac{A^{(i)}_{n+m}}{C_{n+m}} \right),
\end{align*}
we have that
\[m^{(i)}_n\leq\alpha_0^{(i)} \leq M^{(i)}_n,\]
for all $n\in\N$ (see Theorem 1 of \cite{BER} for more details).
Therefore, for all $n\in\mathbb{N}$, $\alpha_0^{(i)}$ is contained either in the interval $\left(\frac{A^{(i)}_{n+j}}{C_{n+j}},\frac{A^{(i)}_{n+j+1}}{C_{n+j+1}}\right)$ or $\left(\frac{A^{(i)}_{n+j+1}}{C_{n+j+1}},\frac{A^{(i)}_{n+j}}{C_{n+j}}\right)$, for some $j=0,\ldots,m-1$.
This means that
\[\left|\alpha_0^{(i)}-\frac{A^{(i)}_n}{C_n}\right|<\left|\frac{A^{(i)}_{n+1}}{C_{n+1}}-\frac{A^{(i)}_{n}}{C_{n}}\right|=\frac{|\tilde{A}^{(i)}_{n+1}|}{C_{n+1}C_{n}},\]
for infinitely many $n$ and for all $i=1,\ldots,m$.
\end{proof}

The following two results provide an estimate of the difference between real numbers sharing a certain number of initial partial quotients in their MCF. They hold only in the case when $m=2$, as they both use the inequalities of Lemma \ref{Lem: Recursion}, that are specific for Jacobi's algorithm.

\begin{Lemma}\label{Lem: pqsame}
Given 
$$(\alpha_0,\beta_0) = [(a_0, a_1, \ldots), (b_0, b_1, \ldots)] \quad \text{and} \quad (\alpha'_0,\beta'_0) = [(a'_0, a'_1, \ldots), (b'_0, b'_1, \ldots)],$$ if $a_i = a'_i$ and $b_i = b'_i$ for all $i = 0, \ldots, n$, then
\begin{equation*}
\lvert\alpha_0-\alpha'_0\rvert<\frac{1}{C_{n-2}},\quad
|\beta_0-\beta'_0|<\frac{1}{C_{n-2}}.
\end{equation*}
\end{Lemma}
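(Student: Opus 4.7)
The key observation is that the hypothesis forces the two sequences of convergents to coincide up to index $n$: since the recursions defining $A_i, B_i, C_i$ depend only on the partial quotients $a_j, b_j$ with $j \leq i$, one has $A_i = A_i'$, $B_i = B_i'$, $C_i = C_i'$ for every $i \leq n$. In particular, the three convergents $A_k/C_k$ for $k \in \{n-2, n-1, n\}$ are identical for the two MCFs, and likewise for $B_k/C_k$.

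By the argument inside the proof of Lemma \ref{Lem: convJ} (the same one used at the start of Lemma \ref{lemma:approx}), both $\alpha_0$ and $\alpha_0'$ lie in the closed interval $[m_{n-2}, M_{n-2}]$, where $m_{n-2}, M_{n-2}$ denote the minimum and maximum of the three common convergents above. Consequently $|\alpha_0 - \alpha_0'| \leq M_{n-2} - m_{n-2}$, and it suffices to bound the length of this bracketing interval.

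The length $M_{n-2} - m_{n-2}$ is the distance between the two extreme convergents, and therefore equals one of the three pairwise differences
\[
\frac{|\tilde A_{n-1}|}{C_{n-1} C_{n-2}}, \qquad \frac{|\tilde A_n|}{C_n C_{n-1}}, \qquad \frac{|\tilde{\tilde A}_n|}{C_n C_{n-2}}.
\]
Applying the bounds $|\tilde A_k| \leq C_k$ and $|\tilde{\tilde A}_k| \leq C_k$ from Lemma \ref{Lem: Recursion}, together with the fact that $(C_k)$ is non-decreasing (so $1/C_{n-1} \leq 1/C_{n-2}$), each of these three quantities is at most $1/C_{n-2}$. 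This yields $|\alpha_0 - \alpha_0'| \leq 1/C_{n-2}$, and strict inequality follows because $\alpha_0, \alpha_0'$ are irrational whenever the MCFs have no interruptions, so they cannot equal the rational endpoints $m_{n-2}, M_{n-2}$ of the bracketing interval. The bound $|\beta_0 - \beta_0'| < 1/C_{n-2}$ is obtained verbatim by replacing $\tilde A_n, \tilde{\tilde A}_n$ with $\tilde B_n, \tilde{\tilde B}_n$ and invoking the analogous inequalities from Lemma \ref{Lem: Recursion}.

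I do not expect a serious obstacle here: the only care needed is selecting the index $n-2$ (so that the three bracketing convergents still lie within the prefix on which the two MCFs agree), and combining the three pairwise estimates into a single clean upper bound $1/C_{n-2}$.
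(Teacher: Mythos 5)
Your proposal is correct and follows essentially the same route as the paper: both bracket $\alpha_0,\alpha_0'$ between $m_{n-2}$ and $M_{n-2}$ (the min and max of the last three shared convergents) and then bound the interval width via the inequalities $|\tilde A_k|,|\tilde{\tilde A}_k|\leq C_k$ of Lemma \ref{Lem: Recursion}. You have in fact spelled out the case analysis on the three pairwise differences that the paper leaves implicit, and you correctly use the index $n-2$ where the paper's text has an evident typo writing $m_{n+2},M_{n+2}$.
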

\begin{proof}
Let us suppose without loss of generality that $\alpha_0'\leq\alpha_0$. Since $\alpha_0$ and $\alpha_0'$ share up to the $n$-th convergent, we know that, for all $n\in\mathbb{N}$,
\begin{equation} 
m_{n+2}< \alpha_0'\leq\alpha_0<M_{n+2}.
\end{equation}
where $m_{n+2}$ and $M_{n+2}$ are as in the proof of Lemma \ref{Lem: convJ}. The thesis then follows from the latter expression, by applying the inequalities of Lemma \ref{Lem: Recursion}. The proof for $\beta_0,\beta_0'$ is similar. 
\end{proof}

\begin{Lemma}\label{Lem: equalpq}
Let $(\alpha,\beta)=[(a_0,a_1,\ldots),(b_0,b_1,\ldots)]$ and $(\alpha',\beta')=[(a_0',a_1',\ldots),(b_0',b_1',\ldots)]$ such that $a_i=a_i'$ and $b_i=b_i'$ for all $i=0,\ldots,n$. Then
\begin{equation}
|\alpha-\alpha'|<\frac{2}{C_n}, \ \ \ \ |\beta-\beta'|<\frac{2}{C_n}.
\end{equation}
\begin{proof}
The thesis follows by writing
\[|\alpha-\alpha'|<\left|\alpha-\frac{A_n}{C_n}\right|+\left|\alpha'-\frac{A_n}{C_n}\right|<\frac{2}{C_n},\]
and the same holds for $\beta,\beta '$.
\end{proof}
\end{Lemma}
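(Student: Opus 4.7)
The plan is to interpose the common convergent $A_n/C_n$ between $\alpha$ and $\alpha'$, and likewise $B_n/C_n$ between $\beta$ and $\beta'$. Since the hypothesis that the two MCFs share the partial quotients $a_i = a_i'$ and $b_i = b_i'$ for $i = 0, \ldots, n$ forces the integer sequences $A_k, B_k, C_k$ to coincide for all $k \leq n$, these are genuinely common rational approximations to both sides. The triangle inequality
\[|\alpha - \alpha'| \leq \left|\alpha - \frac{A_n}{C_n}\right| + \left|\alpha' - \frac{A_n}{C_n}\right|\]
then reduces the task to proving the one-sided uniform bound $|\alpha - A_n/C_n| < 1/C_n$ (and its analogues for $\alpha'$, $\beta$, $\beta'$), valid for \emph{every} $n$, not only infinitely many $n$ as in Lemma \ref{lemma:approx}.

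For this approximation estimate I would recycle the bracketing argument from the proof of Lemma \ref{Lem: convJ}: $\alpha$ lies in the interval $[m_n, M_n]$, where $m_n$ and $M_n$ are the minimum and maximum of the three consecutive convergents $A_n/C_n, A_{n+1}/C_{n+1}, A_{n+2}/C_{n+2}$. Consequently $|\alpha - A_n/C_n| \leq M_n - m_n$, and $M_n - m_n$ is just the largest among the three pairwise gaps between these convergents. Each such gap has the shape
\[\left|\frac{A_{n+i}}{C_{n+i}} - \frac{A_{n+j}}{C_{n+j}}\right| = \frac{|A_{n+i}C_{n+j} - A_{n+j}C_{n+i}|}{C_{n+i}C_{n+j}},\]
whose numerator is, up to sign, one of $\tilde A_{n+1}$, $\tilde A_{n+2}$, or $\tilde{\tilde A}_{n+2}$. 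Applying the bounds $|\tilde A_k|, |\tilde{\tilde A}_k| \leq C_k$ from Lemma \ref{Lem: Recursion}, together with the monotonicity $C_k \leq C_{k+1}$ (which follows from admissibility and the recurrence for $C_n$), each pairwise gap is at most $1/C_n$, so $M_n - m_n \leq 1/C_n$.

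Combining this one-sided bound with the triangle inequality, and repeating the identical reasoning on the $B_n/C_n$-convergents, yields both $|\alpha - \alpha'| \leq 2/C_n$ and $|\beta - \beta'| \leq 2/C_n$. The main point that requires a moment of care is the \emph{strict} inequality demanded by the statement: this is recovered by noting that an irrational $\alpha$ lies strictly inside the open interval $(m_n, M_n)$ (since both endpoints are rationals of the form $A_j/C_j$ while $\alpha$ is irrational), which forces $|\alpha - A_n/C_n|$ to be strictly smaller than $M_n - m_n$, and therefore strictly smaller than $1/C_n$. Beyond this small subtlety about strictness, the argument is essentially a triangle inequality built on top of the bracketing of Lemma \ref{Lem: convJ} and the recurrence estimates of Lemma \ref{Lem: Recursion}, so I do not anticipate any serious obstacle.
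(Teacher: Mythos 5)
Your proof is correct and follows the same route as the paper: a triangle inequality through the shared convergent $A_n/C_n$, reducing to the estimate $\lvert\alpha - A_n/C_n\rvert < 1/C_n$. The paper states this estimate without justification, whereas you supply it by combining the bracketing $m_n \le \alpha \le M_n$ from Lemma~\ref{Lem: convJ} with the bounds $|\tilde A_k|, |\tilde{\tilde A}_k| \le C_k$ from Lemma~\ref{Lem: Recursion} and the monotonicity of $(C_k)$; this is precisely the reasoning the authors leave implicit, so the two arguments coincide in substance.
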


\begin{Remark}
Lemma \ref{Lem: pqsame} and Lemma \ref{Lem: equalpq} provide two different approximations for the difference of elements sharing the first partial quotients of their MCF. In fact, although $\frac{2}{C_n}$ is usually smaller than $\frac{1}{C_{n-2}}$, as $C_n=a_nC_{n-1}+b_nC_{n-2}+C_{n-3}$, it is not hard to see that this is not always the case.
\end{Remark}

\begin{Proposition}\label{Lem: numedenN}
Let $(\alpha_0^{(1)},\ldots,\alpha_0^{(m)})\in\mathbb{R}^m$ such that $0<\alpha_0^{(i)}<1$ for all $i=1,\ldots,m$. Then there exists a constant $K\in\N$ such that $A_n^{(i)}\leq KC_n$ for all $n\geq 0$.
\begin{proof}
Let $(M_k^{(i)})_{k\geq 0}$ be the sequence
\[M_k ^{(i)}= \max\left( \cfrac{A_k^{(i)}}{C_k}, \cfrac{A_{k+1}^{(i)}}{C_{k+1}}, \ldots,\cfrac{A_{k+m}^{(i)}}{C_{k+m}} \right),\]
for all $i=1,\ldots,m$. From the proof of Lemma \ref{Lem: convJ} we know that, for all $i$, $(M_k^{(i)})_{k\geq 0}$ is strictly decreasing and converging to $\alpha_0^{(i)}<1$. Therefore, there exists $k_0$ such that $A_k^{(i)}\leq C_k$ for all $k\geq k_0$, and the claim is proved.
\end{proof}
\end{Proposition}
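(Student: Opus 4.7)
The plan is to exploit the convergence of the convergents $A_n^{(i)}/C_n$ to $\alpha_0^{(i)}<1$, combined with the monotonicity of the ``window maxima'' $M_k^{(i)}$, to conclude that each ratio $A_n^{(i)}/C_n$ is bounded. Once the ratios are uniformly bounded across all $n$ and all $i\in\{1,\ldots,m\}$, it suffices to take $K$ to be the ceiling of that uniform bound.

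More concretely, first I would recall that the proof of Lemma \ref{Lem: convJ}, adapted from the case $m=2$ to the general $m$-dimensional setting (as was already sketched in Lemma \ref{lemma:approx}), shows that for each $i$ the sequence
\[
M_k^{(i)} := \max\left(\frac{A_k^{(i)}}{C_k},\,\frac{A_{k+1}^{(i)}}{C_{k+1}},\,\ldots,\,\frac{A_{k+m}^{(i)}}{C_{k+m}}\right)
\]
is non-increasing in $k$ and converges to $\alpha_0^{(i)}$. Since by hypothesis $\alpha_0^{(i)}<1$, there exists an index $k_0^{(i)}$ such that $M_k^{(i)}<1$ for all $k\geq k_0^{(i)}$. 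In particular, for every $n\geq k_0^{(i)}$ one has $A_n^{(i)}/C_n<1$, hence $A_n^{(i)}\leq C_n$ (note that $C_n$ is a strictly positive integer for all $n\geq 0$ because of the admissibility conditions on the partial quotients and the initial data).

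Setting $k_0:=\max_{1\leq i\leq m}k_0^{(i)}$, the inequality $A_n^{(i)}\leq C_n$ holds uniformly in $i$ for all $n\geq k_0$. It remains to handle the finitely many indices $n=0,1,\ldots,k_0-1$ and $i=1,\ldots,m$. Over this finite set, the ratios $A_n^{(i)}/C_n$ attain only finitely many non-negative values, so I would define
\[
K := \max\left\{1,\ \left\lceil \max_{\substack{0\leq n<k_0\\1\leq i\leq m}}\frac{A_n^{(i)}}{C_n}\right\rceil\right\}\in\mathbb{N},
\]
which is a natural number by construction. Then $A_n^{(i)}\leq KC_n$ holds for all $n<k_0$ by choice of $K$, and for all $n\geq k_0$ by the earlier step (since $K\geq 1$). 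No serious obstacle is expected here: the only subtlety is justifying the adaptation of the monotonicity argument from Lemma \ref{Lem: convJ} to general $m$, which is routine and has in fact already been invoked earlier in this section.
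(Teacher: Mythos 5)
Your argument is correct and follows essentially the same route as the paper: invoke the monotonicity and convergence of the window maxima $M_k^{(i)}$ from the proof of Lemma \ref{Lem: convJ} to get $A_n^{(i)}\leq C_n$ for all $n$ past some $k_0$, then absorb the finitely many remaining indices into the constant $K$. You spell out the construction of $K$ more explicitly than the paper does, but that is the step the paper leaves implicit, not a different idea.
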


For the $2$-dimensional case, we can do better, as the numerators of the convergents can be explicitly bounded in terms of the denominator.

\begin{Lemma}\label{Lem: numeden}
Let $(\alpha_0,\beta_0)\in\mathbb{R}^2$ such that $0<\alpha_0,\beta_0<1$, then $A_n,B_n\leq C_n$ for all $n\geq 0$.
\begin{proof}
The claim can be easily proved by induction. For $n=0$, $A_0=a_0=0$ and $B_0=b_0=0$, since $0<\alpha_0,\beta_0<1$ so that $1=C_0>A_0,B_0$.
Then we have that $A_1=b_1$, $B_1=1$ and $C_1=a_1$ so that also in this case $C_1\geq A_1, B_1$. At the second step, $A_2=a_2b_1$, $B_2=a_2$ and $C_2=a_2a_1+1$ and  since $a_2\geq b_2$, the base step holds also for $n=2$. Let us now suppose that the thesis holds for $n-3$, $n-2$ and $n-1$ and let us prove it for $n$. By the recursion formula we have
\begin{align*}
A_{n}&=a_nA_{n-1}+b_nA_{n-2}+A_{n-3},\\
B_{n}&=a_nB_{n-1}+b_nB_{n-2}+B_{n-3},\\
C_{n}&=a_nC_{n-1}+b_nC_{n-2}+C_{n-3}.
\end{align*}
Comparing the right-hand sides of the latter equations, we can see that $C_n\geq A_n,B_n$ by inductive hypothesis, and the claim holds.
\end{proof}
\end{Lemma}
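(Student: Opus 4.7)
The plan is to proceed by strong induction on $n$, exploiting that, by linearity of the three-term recurrence, the auxiliary sequences $D_n := C_n - A_n$ and $E_n := C_n - B_n$ satisfy
\[
D_n = a_n D_{n-1} + b_n D_{n-2} + D_{n-3}, \qquad E_n = a_n E_{n-1} + b_n E_{n-2} + E_{n-3},
\]
with the same coefficients $a_n, b_n$. The goal is to show $D_n, E_n \geq 0$ for every $n \geq 0$. Since the admissibility conditions \eqref{Eq: conditionsJP} give $a_n \geq 1$ and $b_n \geq 0$ for $n \geq 1$, the recurrence immediately preserves nonnegativity: once three consecutive nonnegative values have been established, the inductive step from $(n-3, n-2, n-1)$ to $n$ is a one-line combination.

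What remains is to secure the base cases $n = 0, 1, 2$. First I would use the hypothesis $0 < \alpha_0, \beta_0 < 1$ to note that $a_0 = b_0 = 0$, and then unfold the recurrences against the initial conditions of the paper to compute
\[
A_0 = B_0 = 0,\ C_0 = 1;\quad A_1 = b_1,\ B_1 = 1,\ C_1 = a_1;\quad A_2 = a_2 b_1 + 1,\ B_2 = a_2,\ C_2 = a_1 a_2 + b_2.
\]
The comparisons at $n = 0$ are trivial, while at $n = 1$ the inequality $A_1 \leq C_1$ uses $a_1 \geq b_1$ and $B_1 \leq C_1$ uses $a_1 \geq 1$. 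The inequality $B_2 \leq C_2$ reads $a_2 \leq a_1 a_2 + b_2$ and follows from $a_1 \geq 1$.

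The only real obstacle is the bound $A_2 \leq C_2$, i.e.\ $a_2(a_1 - b_1) + b_2 \geq 1$. I would dispose of it by invoking the special admissibility clause: if $a_1 > b_1$ then $a_2(a_1 - b_1) \geq a_2 \geq 1$; if instead $a_1 = b_1$, then precisely the extra condition ``$b_{n+1} \geq 1$ when $a_n = b_n$'' forces $b_2 \geq 1$, which closes the argument. With the three base values of both $D_n$ and $E_n$ nonnegative, the inductive step propagates the bounds for all $n \geq 3$ and the lemma follows. The framing in terms of $D_n$ and $E_n$ also makes clear that no finer structural information is needed—only sign preservation.
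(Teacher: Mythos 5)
Your proof is correct and follows essentially the same route as the paper's: strong induction on $n$ using the three-term recurrence, with admissibility supplying $a_n\geq 1$, $b_n\geq 0$ to preserve the sign at the inductive step. Your reformulation in terms of $D_n = C_n - A_n$ and $E_n = C_n - B_n$ is only a cosmetic repackaging of the same idea. Where you do add genuine value is in the base case $n=2$: you correctly compute $A_2 = a_2 b_1 + 1$ and $C_2 = a_1 a_2 + b_2$ (the paper's displayed values $A_2 = a_2 b_1$, $C_2 = a_2 a_1 + 1$ appear to be a typo), and you notice that the inequality $A_2\leq C_2$, i.e.\ $a_2(a_1-b_1)+b_2\geq 1$, actually requires the extra admissibility clause $b_2\geq 1$ when $a_1=b_1$ — a case the paper's argument does not address. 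So this is the same proof, carried out with more care at the delicate corner.
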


Following the same inductive proof of Lemma \ref{Lem: numeden}, it is possible to prove the following general result.

\begin{Corollary}\label{Cor: NM}
Let $(\alpha_0,\beta_0)\in\mathbb{R}^2$ such that
\[N< \alpha_0<N+1,\ \ M<\beta_0<M+1,\]
for some integers $N,M\geq 0 $. Then, for all $n\geq 0$,
\begin{align*}
NC_n\leq A_n<(N+1)C_n,\\
MC_n\leq B_n<(M+1)C_n.
\end{align*}
\end{Corollary}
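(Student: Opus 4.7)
The plan is to mirror the inductive structure of Lemma \ref{Lem: numeden}, exploiting the fact that the three recursions
\[A_n = a_n A_{n-1} + b_n A_{n-2} + A_{n-3}, \quad B_n = a_n B_{n-1} + b_n B_{n-2} + B_{n-3}, \quad C_n = a_n C_{n-1} + b_n C_{n-2} + C_{n-3}\]
are identical linear combinations with non-negative coefficients (by the admissibility conditions \eqref{Eq: conditionsJP}). So one only needs to bound the initial values and then propagate the inequalities through the recursion.

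For the base cases, I would check $n = 0, 1, 2$ directly. At $n=0$ the equality $A_0 = a_0 = N$, $C_0 = 1$ gives $NC_0 \leq A_0 \leq (N+1)C_0$; symmetrically for $B_0 = M$. At $n=1$ one computes $A_1 = a_1 N + b_1$ and $C_1 = a_1$, and the bound reduces to $0 \leq b_1 \leq a_1$, which is exactly the admissibility condition. The calculation at $n=2$ is analogous and again reduces to $a_n \geq b_n \geq 0$.

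For the inductive step, assume the bounds hold at indices $n-3, n-2, n-1$. Multiplying the lower bound at step $n-1$ by $a_n \geq 0$, the lower bound at step $n-2$ by $b_n \geq 0$, and adding the lower bound at step $n-3$, we get
\[A_n \geq a_n (NC_{n-1}) + b_n (NC_{n-2}) + NC_{n-3} = N(a_n C_{n-1} + b_n C_{n-2} + C_{n-3}) = NC_n,\]
and doing the same with the upper bounds yields $A_n \leq (N+1)C_n$. The argument for $B_n$ is word-for-word identical.

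I do not expect a real obstacle; the only subtlety is the careful use of admissibility in the base cases to pin down $b_1 \leq a_1$ and the analogous constraint at $n=2$. A more conceptual alternative, if desired, is to observe that $A_n$ is independent of $b_0$ and linear in $a_0$ with coefficient $C_n$, so one can write $A_n = N C_n + A'_n$, where $A'_n$ is the corresponding numerator for the shifted pair $(\alpha_0 - N, \beta_0 - M) \in (0,1)^2$; the bounds $0 \leq A'_n \leq C_n$ then follow immediately from Lemma \ref{Lem: numeden}, and symmetrically for $B_n$.
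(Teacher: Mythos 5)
Your inductive argument is exactly the approach the paper intends (it explicitly says the corollary follows ``by the same inductive proof of Lemma~\ref{Lem: numeden}''), and the induction is set up correctly: base cases $n=0,1,2$ and a three-step recursion with non-negative coefficients. One discrepancy is worth flagging, and it is in the paper's statement rather than in your proof: the corollary asserts the \emph{strict} inequality $A_n<(N+1)C_n$, but at $n=1$ the upper bound reduces, as you note, to $b_1\le a_1$, which is only the admissibility condition and allows equality. When $a_1=b_1$ (admissible provided $b_2\ge1$) one gets $A_1=a_1N+b_1=(N+1)a_1=(N+1)C_1$, so the strict bound fails. This is consistent with Lemma~\ref{Lem: numeden} itself, which only claims $A_n\le C_n$; the corollary's right-hand inequality should be $\le$, and that is what your proof establishes. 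Your alternative argument---observing $A_n$ is affine in $a_0$ with slope $C_n$, so $A_n=NC_n+A_n'$ where $A_n'$ is the numerator for the shifted pair, then invoking Lemma~\ref{Lem: numeden}---is a clean shortcut that avoids repeating the induction; just note that the identity $A_n-a_0C_n=A_n'$ holds for $n\ge-2$ (it fails at $n=-3$), which is harmless since the recursion from $n\ge1$ onward only reads indices $\ge-2$.
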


Lemma \ref{Lem: Recursion} provides the bound $\tilde{A}_n\leq C_n$, for the case $m=2$. In Theorem \ref{Thm: davrothmulti}, we need to bound $\tilde{A}_{n+1}$ using the denominator of the previous step $C_n$. Therefore, we prove the following lemma for Jacobi's MCF, but a similar result can be obtained for $m\geq 3$.

\begin{Lemma}\label{Lem: tilde}
Let $[(a_0,a_1,\ldots),(b_0,b_1,\ldots)]$ such that $a_{n+1}<C_n$ for all $n\in\mathbb{N}$. Then
\begin{equation}
\tilde{A}_{n+1}<3C_n^2, \ \ \ \ \tilde{B}_{n+1}<3C_n^2,
\end{equation}
for all $n\in\mathbb{N}$.
\begin{proof}
We know that, for all $n\in\mathbb{N}$, $\tilde{A}_{n+1}\leq C_{n+1}$. Therefore, we obtain
\[\tilde{A}_{n+1}\leq C_{n+1}=a_{n+1}C_n+b_{n+1}C_{n-1}+C_{n-2}<a_{n+1}(C_n+C_{n-1}+C_{n-2})<3C_n^2,\]
since $b_{n+1}\leq a_{n+1}<C_n$, and similarly for $\tilde{B}_{n+1}$.
\end{proof}
\end{Lemma}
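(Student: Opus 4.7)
The goal is to improve the trivial bound $|\tilde A_{n+1}| \leq C_{n+1}$ (and the analogous one for $\tilde B_{n+1}$) provided by Lemma \ref{Lem: Recursion} by showing that under the hypothesis $a_{n+1} < C_n$ the next denominator grows at most quadratically in $C_n$. My plan is therefore to estimate $C_{n+1}$ directly from its three-term recurrence and then substitute the hypothesis.

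Expanding
\[
C_{n+1} \;=\; a_{n+1} C_n + b_{n+1} C_{n-1} + C_{n-2},
\]
I would invoke the admissibility conditions \eqref{Eq: conditionsJP} to obtain $a_{n+1} \geq 1$ and $b_{n+1} \leq a_{n+1}$, which allows me to bound each of the three summands on the right by $a_{n+1}$ times a previous denominator (the standalone $C_{n-2}$ is absorbed because $a_{n+1} \geq 1$). The defining recurrence with nonnegative coefficients, combined with $a_n \geq 1$, also yields the monotonicity $C_{n-2} \leq C_{n-1} \leq C_n$. Putting these together,
\[
C_{n+1} \;\leq\; a_{n+1}\bigl(C_n + C_{n-1} + C_{n-2}\bigr) \;\leq\; 3 a_{n+1} C_n,
\]
and the assumed bound $a_{n+1} < C_n$ then gives $C_{n+1} < 3 C_n^2$. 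Chaining this with $|\tilde A_{n+1}|, |\tilde B_{n+1}| \leq C_{n+1}$ from Lemma \ref{Lem: Recursion} closes both estimates at once.

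I do not foresee any genuine obstacle: the proof is essentially a one-line estimate chained with the recursion. The only point requiring attention is that the admissibility inequalities $b_{n+1} \leq a_{n+1}$ and $a_{n+1} \geq 1$, as well as the monotonicity of $(C_n)$, all need to hold at the indices in play, but this is guaranteed by \eqref{Eq: conditionsJP} once $n \geq 1$, which is the regime where the hypothesis $a_{n+1}<C_n$ is meaningful.
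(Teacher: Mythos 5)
Your proof is correct and follows essentially the same route as the paper's: start from $|\tilde A_{n+1}|, |\tilde B_{n+1}| \leq C_{n+1}$ (Lemma \ref{Lem: Recursion}), bound $C_{n+1}$ by $a_{n+1}(C_n + C_{n-1} + C_{n-2})$ using admissibility, then apply monotonicity of $(C_n)$ and the hypothesis $a_{n+1} < C_n$. Your write-up is slightly more explicit about where the admissibility conditions and monotonicity are invoked, but the argument is the same.
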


The next lemma shows that periodic Jacobi's MCFs converge to cubic irrationals in the same number field.

\begin{Lemma}\label{Lem: samecubicfield}
Let 
\begin{equation}
(\alpha_0,\beta_0)=[(a_0,\ldots,a_{k-1},\overline{a_k,\ldots,a_{k+h-1}}),(b_0,\ldots,b_{k-1},\overline{b_k,\ldots,b_{k+h-1}})],
\end{equation}
and
\begin{equation}
(\alpha_0',\beta_0')=[(c_0,\ldots,c_{k-1},\overline{a_k,\ldots,a_{k+h-1}}),(d_0,\ldots,d_{k-1},\overline{b_k,\ldots,b_{k+h-1}})].
\end{equation}
Then $\alpha_0, \alpha_0', \beta_0, \beta_0'$ belong to the same cubic number field.
\begin{proof}
Let $(\tilde \alpha, \tilde \beta) = [(\overline{a_k,\ldots,a_{k+h-1}}),(\overline{b_k,\ldots,b_{k+h-1}})]$. Then $\tilde \alpha$ and $\tilde \beta$ are cubic irrationals belonging to the same cubic field (see \cite[Section 3F]{Bren}). Thus, recalling \eqref{eq:frac-alpha0}, the thesis follows from
\[ \alpha_0 = \cfrac{\tilde \alpha A_{k-1} + \tilde \beta A_{k-2} + A_{k-3}}{\tilde \alpha C_{k-1} + \tilde \beta C_{k-2} + C_{k-3}}, \quad \beta_0 = \cfrac{\tilde \alpha B_{k-1} + \tilde \beta B_{k-2} + B_{k-3}}{\tilde \alpha B_{k-1} + \tilde \beta B_{k-2} + B_{k-3}}, \]
\[ \alpha_0' = \cfrac{\tilde \alpha A_{k-1}' + \tilde \beta A_{k-2}' + A_{k-3}'}{\tilde \alpha C_{k-1}' + \tilde \beta C_{k-2}' + C_{k-3}'}, \quad \beta_0' = \cfrac{\tilde \alpha B_{k-1}' + \tilde \beta B_{k-2}' + B_{k-3}'}{\tilde \alpha B_{k-1}' + \tilde \beta B_{k-2}' + B_{k-3}'}, \]
where $\left(\cfrac{A_n}{C_n}, \cfrac{B_n}{C_n}\right)$ and $\left(\cfrac{A_n'}{C_n'}, \cfrac{B_n'}{C_n'}\right)$, for all $n \geq 0$, are the convergents of $(\alpha_0, \beta_0)$ and $(\alpha_0', \beta_0')$, respectively.
\end{proof}
\end{Lemma}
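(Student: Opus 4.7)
The plan is to reduce the result to the known fact that a \emph{purely} periodic Jacobi MCF represents a pair of cubic irrationals in a common cubic number field, and then to show that the preperiodic prefix cannot enlarge that field. Accordingly, I set $(\tilde\alpha,\tilde\beta) = [(\overline{a_k,\ldots,a_{k+h-1}}),(\overline{b_k,\ldots,b_{k+h-1}})]$ and invoke the classical theory of periodic Jacobi--Perron expansions (cited as \cite[Section 3F]{Bren}): the pair $(\tilde\alpha,\tilde\beta)$ satisfies a system of algebraic equations whose elimination yields a single irreducible cubic over $\Q$, so both $\tilde\alpha$ and $\tilde\beta$ belong to a common cubic number field $K \subset \R$.

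The next step is to propagate this information back through the preperiodic part. By the shift property of Jacobi's algorithm recorded just above \eqref{eq:frac-alpha0}, the complete quotients of the MCF for $(\alpha_0,\beta_0)$ at index $k$ are exactly $(\tilde\alpha,\tilde\beta)$. Substituting these into the specialization of \eqref{eq:frac-alpha0} at index $k$ yields
$$\alpha_0 = \frac{\tilde\alpha\, A_{k-1} + \tilde\beta\, A_{k-2} + A_{k-3}}{\tilde\alpha\, C_{k-1} + \tilde\beta\, C_{k-2} + C_{k-3}}, \qquad \beta_0 = \frac{\tilde\alpha\, B_{k-1} + \tilde\beta\, B_{k-2} + B_{k-3}}{\tilde\alpha\, C_{k-1} + \tilde\beta\, C_{k-2} + C_{k-3}},$$
where the $A_j, B_j, C_j$ are integer convergents of the preperiodic prefix $(a_0,\ldots,a_{k-1}), (b_0,\ldots,b_{k-1})$. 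These formulas display $\alpha_0$ and $\beta_0$ as elements of $\Q(\tilde\alpha,\tilde\beta) \subseteq K$; the denominators are nonzero because the MCF converges in $\R$ by Lemma \ref{Lem: convJ}. Running exactly the same argument with the alternative prefix $(c_0,\ldots,c_{k-1}), (d_0,\ldots,d_{k-1})$ and its associated convergents $A_j', B_j', C_j'$ gives $\alpha_0', \beta_0' \in K$ as well, and the proof concludes.

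The principal obstacle is the initial reduction, which is not internal to the paper but borrowed from \cite{Bren}: one must know that the elimination performed on the purely periodic system produces a genuinely \emph{cubic} irreducible polynomial (not linear or quadratic), and that $\tilde\alpha,\tilde\beta$ are both primitive elements of the resulting cubic field rather than lying in a proper subfield. Once this input is granted, the remaining work is merely the observation that the M\"obius-type action of the integer matrix $M_0^{(2)} M_1^{(2)}\cdots M_{k-1}^{(2)}$ on $(\tilde\alpha,\tilde\beta)$ stays inside $K$ because $K$ is a field, and that this works uniformly for both choices of prefix since they share the same tail $(\tilde\alpha,\tilde\beta)$.
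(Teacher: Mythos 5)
Your proof is correct and follows essentially the same route as the paper: reduce to the purely periodic tail $(\tilde\alpha,\tilde\beta)$, invoke \cite[Section 3F]{Bren} for the common cubic field, and use \eqref{eq:frac-alpha0} to express $\alpha_0,\beta_0,\alpha_0',\beta_0'$ as elements of $\Q(\tilde\alpha,\tilde\beta)$. (Incidentally, your formula for $\beta_0$ correctly has $\tilde\alpha C_{k-1}+\tilde\beta C_{k-2}+C_{k-3}$ in the denominator, whereas the paper's proof contains a typo there.)
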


Now, we provide lower bound for the $n$-th convergent of Jacobi's MCF and, in the case of bounded partial quotients, also an upper bound.

\begin{Lemma}\label{Lem: psi}
Let $[(a_0,a_1,\ldots),(b_0,b_1,\ldots)]$ be the sequences of partial quotients of a MCF. Then $C_n>\psi^{n-2}$, where $\psi$ is the only real root of the polynomial $f(x)=x^3-x^2-1$.\\
Moreover, if the partial quotients $a_n$ are bounded by a constant $M$, i.e. $a_n\leq M$ for all $n\in\mathbb{N}$, then $C_n\leq \eta^n$, where $\eta$ is the positive real root of the polynomial $g(x)=x^3-Mx^2-Mx-1$.
\begin{proof}
We prove both the statements by induction. Notice that $\psi\approx 1.46$ so that for $n=0,1,2$ the thesis is true. For the induction step,
\begin{align*}
C_{n+3}=a_{n+3}C_{n+2}+b_{n+3}C_{n+1}+C_n\geq C_{n+2}+C_n\geq \psi^{n-2}(\psi^2+1)=\psi^{n+1},
\end{align*}
since $\psi^3=\psi^2+1$ as a root of $f$.\smallskip

For the second part of the statement, let us assume that $a_n\leq M$ for all $n\in\mathbb{N}$. In general, the polynomial $g$ can have either one or three real roots. It has three real roots if and only if its discriminant is positive, i.e.
\begin{equation}
\Delta (g)=M^4-18M^2-27>0,
\end{equation}
that happens, in particular,  for all $M\geq 5$. It is possible to see that for $1 \leq M \leq 4$ the base step holds. For $M\geq 5$, by Descartes' rule of signs there is only a positive root $\eta$, as the product of all roots is $1$ and there are exactly two roots of same sign. Moreover, for the same reason, $\eta\geq M\geq 1$ as $M$ is the trace of $g$. Therefore,
\[C_{-1}=0\leq \eta^{-1}, \ \ \ C_{0}=1=\eta^0, \ \ \ C_{1}=a_1\leq M \leq\eta,\]
and the base of the induction is proved. For the inductive step notice that, for all $n\in\mathbb{N}$,
\begin{align*}
C_{n+2}&=a_{n+2}C_{n+1}+b_{n+2}C_{n}+C_{n-1}\leq M\eta^{n+1}+M\eta^{n}+\eta^{n-1}=\\
&=\eta^{n-1}(M\eta^2+M\eta+1)=\eta^{n+2},
\end{align*}
as $\eta$ is a root of $g$, and the thesis is true.
\end{proof}
\end{Lemma}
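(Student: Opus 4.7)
The plan is to prove both bounds by induction on $n$, driven by the recursion $C_n = a_n C_{n-1} + b_n C_{n-2} + C_{n-3}$ together with the defining polynomial identities for $\psi$ and $\eta$. Since the recursion reaches back three steps, each induction needs three consecutive base cases and a triple inductive hypothesis. The design of $\psi$ and $\eta$ is exactly what makes the characteristic relation cancel in the induction step.

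For the lower bound, I would first weaken the recursion using the admissibility conditions $a_n \geq 1$ and $b_n \geq 0$ to obtain $C_n \geq C_{n-1} + C_{n-3}$. Assuming inductively that $C_k \geq \psi^{k-2}$ for $k = n, n+1, n+2$, the chain
$$C_{n+3} \geq C_{n+2} + C_n \geq \psi^n + \psi^{n-2} = \psi^{n-2}(\psi^2 + 1) = \psi^{n-2}\cdot \psi^3 = \psi^{n+1}$$
uses the defining relation $\psi^3 = \psi^2 + 1$ exactly once. The three base cases reduce to $C_0 = 1$, $C_1 = a_1 \geq 1$, $C_2 = a_1 a_2 + b_2 \geq 1$, each of which dominates the corresponding negative power of $\psi$ since $\psi > 1$.

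For the upper bound, from $a_n \leq M$ and $b_n \leq a_n \leq M$ I would first upgrade the recursion to $C_n \leq M C_{n-1} + M C_{n-2} + C_{n-3}$. Assuming $C_{n-j} \leq \eta^{n-j}$ for $j = 1,2,3$ and factoring $\eta^{n-3}$, the induction step $C_n \leq \eta^{n-3}(M\eta^2 + M\eta + 1) = \eta^n$ closes using $g(\eta)=0$. Before running this, I would pin down $\eta$ as follows: Descartes' rule of signs applied to $g(x)=x^3-Mx^2-Mx-1$ (signs $+,-,-,-$) shows at most one positive real root, while $g(0)=-1<0$, $g(M) = -M^2-1<0$ and $g(x)\to+\infty$ force a unique positive root $\eta > M \geq 1$. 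This is cleaner than distinguishing the sign of the discriminant $\Delta(g) = M^4 - 18M^2 - 27$, although one can do that too for $M\geq 5$ and check the small cases $1\leq M\leq 4$ by hand as the author suggests. The base cases $C_0 = 1 = \eta^0$ and $C_1 = a_1 \leq M \leq \eta$ are immediate, while $C_2 = a_1 a_2 + b_2 \leq M^2 + M$ is compared to $\eta^2$ by rewriting $\eta^2 = M\eta + M + 1/\eta$ (from $g(\eta)=0$) and using $\eta > M$ to conclude $\eta^2 > M^2 + M$.

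The main obstacle is not the induction step, which is essentially automatic once the characteristic equations are exploited, but rather verifying the existence and location of $\eta$ in the form needed (a positive real root with $\eta > M$) and confirming the third base case $C_2 \leq \eta^2$. Both collapse to the single estimate $g(M) < 0$, so once this is observed the whole argument flows cleanly from the defining polynomials to the bounds.
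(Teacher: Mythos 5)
Your proof is correct and follows the same inductive scheme as the paper's: weaken the recursion $C_n=a_nC_{n-1}+b_nC_{n-2}+C_{n-3}$ via the admissibility conditions, then close each step by the characteristic identity $\psi^3=\psi^2+1$ (respectively $\eta^3=M\eta^2+M\eta+1$). The one place where you genuinely improve on the paper is the identification and location of $\eta$: the paper splits on the sign of the discriminant $\Delta(g)=M^4-18M^2-27$, runs a Descartes'-rule and root-sum argument for $M\geq 5$, and merely asserts that "for $1\leq M\leq 4$ the base step holds," whereas your single observation that $g(0)=-1<0$ and $g(M)=-M^2-1<0$ together with $g(x)\to+\infty$ and Descartes' rule gives a unique positive root with $\eta>M$, uniformly for every $M\geq 1$, with no case analysis. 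This is cleaner and also fixes a soft spot: the paper's phrase "$\eta\geq M$ as $M$ is the trace" is not conclusive when $g$ has a single real root (complex-conjugate pair could have positive real part), while $g(M)<0$ settles it outright. The only downstream cost of your choice of base cases $n=0,1,2$ (the paper uses $n=-1,0,1$ with the trivial $C_{-1}=0$) is the extra verification $C_2\leq\eta^2$, which you handle correctly by $C_2=a_1a_2+b_2\leq M^2+M$ and $\eta^2=M\eta+M+1/\eta>M^2+M$, again using $\eta>M$.
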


\section{Bounds on the height of cubic irrationals}\label{Sec: bounds}
In this section we prove an explicit bound on the naive height of cubic irrationals corresponding to a periodic Jacobi's MCF.
\bigskip

Let $\gamma$ be an algebraic number and let $P_\gamma$ be its minimal polynomial, that is, the unique polynomial with integer coefficients vanishing at $\gamma$ and irreducible over $\mathbb{Z}$. Then, let $H(\gamma)$ be the naive height of $\gamma$, i.e., the maximum of the absolute values of the coefficients of $P_\gamma$.

In the following theorem we prove that if $(\alpha_0, \beta_0)$ has a periodic MCF expansion, then $\alpha_0$ and $\beta_0$ satisfy a polynomial with integer coefficients of degree three and we find an explicit bound on their naive height in the case $0 < \alpha_0, \beta_0 < 1$. Then, exploiting Corollary \ref{Cor: NM}, the case of general $\alpha_0,\beta_0$ is considered in Theorem \ref{Thm: heightGen},.

\begin{Theorem}\label{Thm: height}
Given $(\alpha_0, \beta_0) \in \mathbb R^2$ such that
\begin{equation}\label{Eq: pureper}
(\alpha_0,\beta_0)=[(a_0,\ldots,a_{k-1},\overline{a_k,\ldots,a_{k+h-1}}),(b_0,\ldots,b_{k-1},\overline{b_k,\ldots,b_{k+h-1}})],
\end{equation}
then $\alpha_0$ and $\beta_0$ are cubic irrationals. Moreover, if $0 < \alpha_0, \beta_0 < 1$, then
\[H(\alpha_0), H(\beta_0)\leq 3024 C_{h+k-1}^9.\]
\end{Theorem}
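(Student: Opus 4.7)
The plan is to prove the result first in the purely periodic case ($k=0$) and then reduce the eventually periodic case to it by a matrix manipulation. When $k=0$, combining \eqref{Eq: pp} with $\alpha_h=\alpha_0$ and $\beta_h=\beta_0$ yields
\[
\mathcal{P}\begin{pmatrix}\alpha_0\\ \beta_0\\ 1\end{pmatrix}=\mu\begin{pmatrix}\alpha_0\\ \beta_0\\ 1\end{pmatrix},\qquad \mathcal{P}=M_0\cdots M_{h-1}=\begin{pmatrix}A_{h-1}&A_{h-2}&A_{h-3}\\ B_{h-1}&B_{h-2}&B_{h-3}\\ C_{h-1}&C_{h-2}&C_{h-3}\end{pmatrix},
\]
where $\mu=\alpha_1\cdots\alpha_h$. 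Using the third row to eliminate $\mu$ and substituting into the first row gives a single equation linear in $\beta_0$, which I solve as $\beta_0=N(\alpha_0)/D(\alpha_0)$ with an explicit quadratic $N$ and linear $D$ in $\alpha_0$. Substituting into the second row and clearing $D^2$ yields $P(\alpha_0)=0$, where $P(x):=C_{h-2}N(x)^2+L(x)N(x)D(x)-R(x)D(x)^2$ with $L(x)=C_{h-1}x+C_{h-3}-B_{h-2}$ and $R(x)=B_{h-1}x+B_{h-3}$. The apparent degree-four coefficient of $P$, equal to $C_{h-2}C_{h-1}^2-C_{h-1}\cdot C_{h-1}\cdot C_{h-2}$, vanishes identically, so $P$ is genuinely a cubic.

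For $k\geq 1$, apply this identity to $(\alpha_k,\beta_k)$: setting $\mathcal{P}'=M_k\cdots M_{k+h-1}$ and $\mu'=\alpha_{k+1}\cdots\alpha_{k+h}$, one has $\mathcal{P}'(\alpha_k,\beta_k,1)^T=\mu'(\alpha_k,\beta_k,1)^T$. Meanwhile, \eqref{Eq: pp} truncated at index $k-1$ gives $(\alpha_k,\beta_k,1)^T=(\alpha_1\cdots\alpha_k)\mathcal{N}^{-1}(\alpha_0,\beta_0,1)^T$ with $\mathcal{N}=M_0\cdots M_{k-1}$. Since every $M_i$ has determinant $1$, so does $\mathcal{N}$, whence $\mathcal{N}^{-1}$ is an integer matrix (the classical adjugate). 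Substituting and multiplying on the left by $\mathcal{N}$ yields
\[
\mathcal{Q}\begin{pmatrix}\alpha_0\\ \beta_0\\ 1\end{pmatrix}=\mu'\begin{pmatrix}\alpha_0\\ \beta_0\\ 1\end{pmatrix},\qquad \mathcal{Q}:=\mathcal{M}\mathcal{N}^{-1}\in\mathrm{Mat}_3(\mathbb{Z}),
\]
with $\mathcal{M}=M_0\cdots M_{k+h-1}$. This reduces the general case to the purely periodic setup with $\mathcal{P}$ replaced by the integer matrix $\mathcal{Q}$, and the cubic irrationality of $\alpha_0,\beta_0$ follows from Lemma~\ref{Lem: samecubicfield}.

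For the height bound, Lemma~\ref{Lem: numeden} (applicable since $0<\alpha_0,\beta_0<1$) gives $A_i,B_i,C_i\leq C_{k+h-1}$ for $i\leq k+h-1$, so every entry of $\mathcal{M}$ is bounded by $C_{k+h-1}$, and every entry of $\mathcal{N}^{-1}$, being a $2\times 2$ minor of $\mathcal{N}$, is bounded by $2C_{k+h-1}^2$. Hence each entry $Q_{ij}$ of $\mathcal{Q}$ satisfies $|Q_{ij}|\leq Q:=6C_{k+h-1}^3$. Repeating the elimination of the first paragraph with $\mathcal{Q}$ in place of $\mathcal{P}$ (the degree-$4$ cancellation depends only on the matrix identity, so it persists even with signed entries) produces an integer cubic annihilating $\alpha_0$ whose coefficients are polynomials of degree $3$ in the $Q_{ij}$. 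A coefficient-by-coefficient bookkeeping bounds each coefficient by $14\,Q^3$, giving $H(\alpha_0)\leq 14\cdot 216\,C_{k+h-1}^9=3024\,C_{k+h-1}^9$; the symmetric elimination (isolating $\alpha_0$ instead of $\beta_0$) yields the same bound for $H(\beta_0)$.

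The main obstacle is the combinatorial bookkeeping required to obtain the constant $3024$: naive triangle-inequality bounds on each monomial of each coefficient produce a constant larger than $14$, so one must group terms carefully (for instance, by combining the two contributions to the coefficient of $x^3$ of the form $Q_{31}Q_{32}(Q_{33}-Q_{11})$ and $-Q_{31}Q_{32}(Q_{33}-Q_{22})$ into $Q_{31}Q_{32}(Q_{22}-Q_{11})$, whose bound is $2Q^3$ rather than $4Q^3$) before taking absolute values. A minor additional check is that the cubic produced is not identically zero, which is guaranteed by the irrationality of $\alpha_0$, itself ensured by the infiniteness (without interruptions) of the MCF expansion via Remark~\ref{Rem:interruptions}.
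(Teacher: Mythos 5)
Your proof is correct and follows essentially the same route as the paper: both arrive at the eigenvector relation for the integer matrix $\mathcal{Q}=\mathcal{M}\mathcal{N}^{-1}$ (the paper writes out $\mathcal{N}^{-1}$ explicitly via the $\tilde A,\tilde B,\tilde U$ sequences, whereas you invoke $\det M_i=1$, but these are the same $2\times 2$ minors), perform the same elimination with the same degree-four cancellation, and bound the entries by $6C_{h+k-1}^3$ via Lemma~\ref{Lem: numeden}. Splitting into a purely periodic step followed by a reduction is a cosmetic reorganization rather than a different argument, and your constant $14Q^3$ matches the paper's worst-case coefficient bound.
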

\begin{proof}
From \eqref{Eq: pureper}, we have that $\alpha_k=\alpha_{k+h}$ and $\beta_k=\beta_{k+h}$, so that, by \eqref{Eq: idk},
\begin{align}\label{Eq: npp}
\begin{pmatrix}
\alpha_k\\
\beta_k\\
1
\end{pmatrix}=&\frac{1}{\alpha_{1}\ldots\alpha_{k+h}}\begin{pmatrix} a_k & 1 & 0 \cr b_k & 0 & 1 \cr 1 & 0 & 0 \end{pmatrix} \cdots \begin{pmatrix} a_{k+h-1} & 1 & 0 \cr b_{k+h-1} & 0 & 1 \cr 1 & 0 & 0 \end{pmatrix}\begin{pmatrix}
\alpha_{k}\\
\beta_{k}\\
1
\end{pmatrix}.
\end{align}
Moreover, by applying \eqref{Eq: pp} with $m=2$,  we get
\begin{align}\label{Eq: reck}
\begin{pmatrix}
\alpha_k\\
\beta_k\\
1
\end{pmatrix}=
\alpha_1\alpha_2\ldots\alpha_{k}\begin{pmatrix} a_{k-1} & 1 & 0 \cr b_{k-1} & 0 & 1 \cr 1 & 0 & 0 \end{pmatrix}^{-1}\cdots \begin{pmatrix} a_0 & 1 & 0 \cr b_0 & 0 & 1 \cr 1 & 0 & 0 \end{pmatrix}^{-1}  \begin{pmatrix}
\alpha_0\\
\beta_0\\
1
\end{pmatrix}.
\end{align}
Following the notation introduced in \eqref{eq:M_n^m}, put $M_n = M_n^{(2)}$ for every $n$. Hence, by plugging \eqref{Eq: reck} into the right-hand side of \eqref{Eq: npp}, we obtain
\begin{align*}
\begin{pmatrix}
\alpha_0\\
\beta_0\\
1
\end{pmatrix}&=\frac{1}{\alpha_{k+1}\ldots\alpha_{k+h}}M_0M_1\ldots M_{k+h-1}(M_0\ldots M_{k-1})^{-1}\begin{pmatrix}
\alpha_{0}\\
\beta_{0}\\
1
\end{pmatrix}=\\
&=\frac{1}{\lambda'}\begin{pmatrix}  A_{k+h-1} & A_{k+h-2} & A_{k+h-3}\cr  B_{k+h-1} & B_{k+h-2} & B_{k+h-3} \cr C_{k+h-1} & C_{k+h-2} & C_{k+h-3}\end{pmatrix}\begin{pmatrix}  A_{k-1} & A_{k-2} & A_{k-3}\cr  B_{k-1} & B_{k-2} & B_{k-3} \cr C_{k-1} & C_{k-2} & C_{k-3}\end{pmatrix}^{-1}\begin{pmatrix}
\alpha_0\\
\beta_0\\
1
\end{pmatrix},
\end{align*}
where $\lambda'=\alpha_{k+1}\ldots\alpha_{k+h}$. Now, notice that
\begin{equation}
\begin{pmatrix}  A_{k-1} & A_{k-2} & A_{k-3}\cr  B_{k-1} & B_{k-2} & B_{k-3} \cr C_{k-1} & C_{k-2} & C_{k-3}\end{pmatrix}^{-1}=
\begin{pmatrix}  \tilde{B}_{k-2} & -\tilde{A}_{k-2} & \tilde{U}_{k-2}\cr  -\tilde{\tilde{B}}_{k-1} & \tilde{\tilde{A}}_{k-1} & -\tilde{\tilde{U}}_{k-1} \cr \tilde{B}_{k-1} & -\tilde{A}_{k-1} & \tilde{U}_{k-1}\end{pmatrix},
\end{equation}
where the latter sequences are as defined in Section \ref{Sec: jacobi}. Thus, we obtain
\begin{align*}
X_{1,1}\alpha_0+X_{1,2}\beta_0+X_{1,3}&=\lambda'\alpha_0,\\
X_{2,1}\alpha_0+X_{2,2}\beta_0+X_{2,3}&=\lambda'\beta_0,\\
X_{3,1}\alpha_0+X_{3,2}\beta_0+X_{3,3}&=\lambda',
\end{align*}
where
\begin{align*}
X_{1,1}&=A_{k+h-1}\tilde{B}_{k-2}-A_{k+h-2}\tilde{\tilde{B}}_{k-1}+A_{k+h-3}\tilde{B}_{k-1},\\
X_{1,2}&=-A_{k+h-1}\tilde{A}_{k-2}+A_{k+h-2}\tilde{\tilde{A}}_{k-1}-A_{k+h-3}\tilde{A}_{k-1},\\
X_{1,3}&=A_{k+h-1}\tilde{U}_{k-2}-A_{k+h-2}\tilde{\tilde{U}}_{k-1}+A_{k+h-3}\tilde{U}_{k-1},\\
X_{2,1}&=B_{k+h-1}\tilde{B}_{k-2}-B_{k+h-2}\tilde{\tilde{B}}_{k-1}+B_{k+h-3}\tilde{B}_{k-1},\\
X_{2,2}&=-B_{k+h-1}\tilde{A}_{k-2}+B_{k+h-2}\tilde{\tilde{A}}_{k-1}-B_{k+h-3}\tilde{A}_{k-1},\\
X_{2,3}&=B_{k+h-1}\tilde{U}_{k-2}-B_{k+h-2}\tilde{\tilde{U}}_{k-1}+B_{k+h-3}\tilde{U}_{k-1},\\
X_{3,1}&=C_{k+h-1}\tilde{B}_{k-2}-C_{k+h-2}\tilde{\tilde{B}}_{k-1}+C_{k+h-3}\tilde{B}_{k-1},\\
X_{3,2}&=-C_{k+h-1}\tilde{A}_{k-2}+C_{k+h-2}\tilde{\tilde{A}}_{k-1}-C_{k+h-3}\tilde{A}_{k-1},\\
X_{3,3}&=C_{k+h-1}\tilde{U}_{k-2}-C_{k+h-2}\tilde{\tilde{U}}_{k-1}+C_{k+h-3}\tilde{U}_{k-1}.\\
\end{align*}
We get rid of $\lambda'$, to obtain
\begin{align}
X_{1,1}\alpha_0+X_{1,2}\beta_0+X_{1,3}&=X_{3,1}\alpha_0^2+X_{3,2}\alpha_0\beta_0+X_{3,3}\alpha_0,\label{Eq: firstX} \\
X_{2,1}\alpha_0+X_{2,2}\beta_0+X_{2,3}&=X_{3,1}\alpha_0\beta_0+X_{3,2}\beta_0^2+X_{3,3}\beta_0.\label{Eq: secondX}
\end{align}
Since $\beta_0$ is not rational, then $X_{2,1}-X_{3,1}\beta_0\neq 0$. Hence from \eqref{Eq: secondX}, we obtain
\[\alpha_0=\frac{X_{3,2}\beta_0^2+(X_{3,3}-X_{2,2})\beta_0-X_{2,3}}{X_{2,1}-X_{3,1}\beta_0}.\]
We substitute the latter expression in \eqref{Eq: firstX} and we get
\begin{align*}
&X_{3,1}(X_{3,2}\beta_0^2+(X_{3,3}-X_{2,2})\beta_0-X_{2,3})^2-(X_{1,2}\beta_0+X_{1,3})(X_{2,1}-X_{3,1}\beta_0)^2+\\
&+(X_{3,2}\beta_0+X_{3,3}-X_{1,1})(X_{3,2}\beta_0^2+(X_{3,3}-X_{2,2})\beta_0-X_{2,3})(X_{2,1}-X_{3,1}\beta_0)=0.
\end{align*}
The two terms of degree $4$ are $X_{3,1}X_{3,2}^2\beta_0^4$ and $-X_{3,1}X_{3,2}^2\beta_0^4$, therefore the left-hand side is a polynomial of degree $3$, that is known to be irreducible \cite{Bren}. With some computations, it is possible to see that the equation reduces to
\[A\beta_0^3+B\beta_0^2+C\beta_0+D=0,\]
where:
\begin{align*}
A=&-X_{1,2}X_{3,1}^2+X_{1,1}X_{3,1}X_{3,2}-X_{2,2}X_{3,1}X_{3,2}+X_{2,1}X_{3,2}^2,\\
B=& \; 2X_{1,2}X_{2,1}X_{3,1}-X_{1,1}X_{2,2}X_{3,1}+X_{2,2}X_{3,1}^2-X_{1,3}X_{3,1}^2-X_{1,1}X_{2,1}X_{3,2}+\\
&-X_{2,1}X_{2,2}X_{3,2} -X_{2,3}X_{3,1}X_{3,2}+X_{1,1}X_{3,1}X_{3,3}-X_{2,2}X_{3,1}X_{3,3}+2X_{2,1}X_{3,2}X_{3,3},\\
C=&-X_{1,2}X_{2,1}^2+X_{1,1}X_{2,1}X_{2,2}+2X_{1,3}X_{2,1}X_{3,1}-X_{1,1}X_{2,3}X_{3,1}+2X_{2,2}X_{2,3}X_{3,1}+\\
&-X_{2,1}X_{2,3}X_{3,2}-X_{1,1}X_{2,1}X_{3,3}-X_{2,1}X_{2,2}X_{3,3}-X_{2,3}X_{3,1}X_{3,3}+X_{2,1}X_{3,3}^2 \\
D=&-X_{1,3}X_{2,1}^2+X_{1,1}X_{2,1}X_{2,3}+X_{2,3}^2X_{3,1}-X_{2,1}X_{2,3}X_{3,3}.
\end{align*}

Using Lemma \ref{Lem: numeden}, we can give an upper bound to the coefficients $X_{i,j}$ in terms of the denominator $C_{h+k-1}$. Notice that $h$ and $k$ are constant, because they represent the period and the pre-period of the continued fraction expansion of $(\alpha_0, \beta_0)$. It is possible to notice that
\begin{equation}\label{Eq: boundX}
X_{i,j}\leq 6 C_{h+k-1}^3.
\end{equation}
This gives a bound on the coefficients of the minimal polynomial of $\beta_0$, and then
\begin{equation}
H(\beta_0)\leq 2592 C_{h+k-1}^9.
\end{equation}
Notice that we used the fact that $A_n,B_n\leq C_n$, therefore we are handling the case when $0<\alpha_0 ,\beta_0 <1$.\bigskip

Now we want to apply the same reasoning to $\alpha_0$, in order to find its minimal polynomial and a bound on its height. From \eqref{Eq: firstX}, we obtain:
\[\beta_0=\frac{X_{3,1}\alpha_0^2+(X_{3,3}-X_{1,1})\alpha_0-X_{1,3}}{X_{1,2}-X_{3,2}\alpha_0}.\]
Notice again that the denominator is nonzero as $\alpha_0$ is irrational.

We substitute the latter expression in \eqref{Eq: secondX} and we get
\begin{align*}
&X_{3,2}(X_{3,1}\alpha_0^2+(X_{3,3}-X_{1,1})\alpha_0-X_{1,3})^2-(X_{2,1}\alpha_0+X_{2,3})(X_{1,2}-X_{3,2}\alpha_0)^2+\\
&+(X_{3,1}\alpha_0+X_{3,3}-X_{2,2})(X_{3,1}\alpha_0^2+(X_{3,3}-X_{1,1})\alpha_0-X_{1,3})(X_{1,2}-X_{3,2}\alpha_0)=0.\\
\end{align*}
The two terms of degree $4$ are $X_{3,2}X_{3,1}^2\alpha_0^4$ and $-X_{3,2}X_{3,1}^2\alpha_0^4$, therefore the left-hand side is a polynomial of degree $3$. With some computations, it is possible to see that the equation reduces to
\[A\alpha_0^3+B\alpha_0^2+C\alpha_0+D=0,\]
where
\begin{align*}
A=& \; X_{1,2}X_{3,1}^2-X_{1,1}X_{3,1}X_{3,2}+X_{2,2}X_{3,1}X_{3,2}-X_{2,1}X_{3,2}^2,\\
B=&-X_{1,1}X_{1,2}X_{3,1}-X_{1,2}X_{2,2}X_{3,1}+X_{1,1}^2X_{3,2}+2X_{1,2}X_{2,1}X_{3,2}-X_{1,1}X_{2,2}X_{3,2}+\\ 
&-X_{1,3}X_{3,1}X_{3,2}-X_{2,3}X_{3,2}^2+2X_{1,2}X_{3,1}X_{3,3} -X_{1,1}X_{3,2}X_{3,3} + X_{2,2}X_{3,2}X_{3,3},\\
C=& -X_{1,2}^2X_{2,1}+X_{1,1}X_{1,2}X_{2,2}-X_{1,2}X_{1,3}X_{3,1}+2X_{1,1}X_{1,3}X_{3,2} -X_{1,3}X_{2,2}X_{3,2} +\\
&+2X_{1,2}X_{2,3}X_{3,2} - X_{1,1}X_{1,2}X_{3,3} - X_{1,2}X_{2,2}X_{3,3}-X_{1,3}X_{3,2}X_{3,3}+X_{1,2}X_{3,3}^2,\\
D=&\; X_{1,2}X_{1,3}X_{2,2} -X_{1,2}^2X_{2,3}+X_{1,3}^2 X_{3,2}-X_{1,2}X_{1,3}X_{3,3}.
\end{align*}

Also in this case we use \eqref{Eq: boundX}, and we get that
\begin{equation}
H(\alpha_0)\leq 3024 C_{h+k-1}^9,
\end{equation}
which yields the thesis.
\end{proof}

\begin{Theorem}\label{Thm: heightGen}
Given $(\alpha_0, \beta_0) \in \mathbb R^2$ such that
\begin{equation}\label{Eq: pureper2}
(\alpha_0,\beta_0)=[(a_0,\ldots,a_{k-1},\overline{a_k,\ldots,a_{k+h-1}}),(b_0,\ldots,b_{k-1},\overline{b_k,\ldots,b_{k+h-1}})],
\end{equation}
then $\alpha_0$ and $\beta_0$ are cubic irrationals. Moreover, if $0 < \alpha_0 < N$, $0 < \beta_0 < M$, for some positive integers $N,M$, then
\[H(\alpha_0), H(\beta_0)\leq 3024 N^5M^5 C_{h+k-1}^9.\]
\end{Theorem}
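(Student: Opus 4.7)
The argument is a direct reprise of Theorem \ref{Thm: height}: the only change is the replacement of Lemma \ref{Lem: numeden} by Corollary \ref{Cor: NM} when bounding the $X_{i,j}$'s. The algebraic derivation from the matrix identities \eqref{Eq: npp}--\eqref{Eq: reck} to the linear system
\[X_{i,1}\alpha_0 + X_{i,2}\beta_0 + X_{i,3} = \lambda'\, y_i, \qquad (y_1,y_2,y_3)=(\alpha_0,\beta_0,1),\]
together with the subsequent elimination of $\lambda'$ and of one of $\alpha_0,\beta_0$ producing integer cubics that annihilate $\beta_0$ and $\alpha_0$, is purely formal and uses no information about the size of $\alpha_0,\beta_0$. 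Therefore these steps go through verbatim, and the irreducibility of the resulting cubics (together with Lemma \ref{Lem: samecubicfield}) shows that $\alpha_0$ and $\beta_0$ are cubic irrationals.

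The bounds on the $X_{i,j}$'s are where the new hypotheses enter. Under $0 < \alpha_0 < N$ and $0 < \beta_0 < M$, Corollary \ref{Cor: NM} gives $A_n \le N C_n$ and $B_n \le M C_n$ for every $n \ge 0$. Combined with the unconditional estimates $|\tilde A_n|, |\tilde B_n|, |\tilde{\tilde A}_n|, |\tilde{\tilde B}_n| \le C_n$ from Lemma \ref{Lem: Recursion} and the elementary bound $|\tilde U_n|, |\tilde{\tilde U}_n| \le 2 NM C_n^2$ coming from the defining formulas, this yields row-sensitive estimates: $|X_{1,j}| \le 3 N C_{h+k-1}^2$, $|X_{2,j}|\le 3 M C_{h+k-1}^2$, and $|X_{3,j}|\le 3 C_{h+k-1}^2$ for $j\in\{1,2\}$, while $|X_{1,3}|\le 6 N^2 M C_{h+k-1}^3$, $|X_{2,3}|\le 6 N M^2 C_{h+k-1}^3$, and $|X_{3,3}|\le 6 N M C_{h+k-1}^3$.

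Substituting these into the explicit expressions for $A,B,C,D$ and inspecting term by term, the worst monomial in the cubic polynomial of $\beta_0$ turns out to be $X_{2,3}^2 X_{3,1}$, bounded by $108 \, N^2 M^4 C_{h+k-1}^8$, and symmetrically the worst one in the polynomial of $\alpha_0$ is $X_{1,3}^2 X_{3,2}$, bounded by $108 \, N^4 M^2 C_{h+k-1}^8$. Since $N^2 M^4, N^4 M^2 \le N^5 M^5$ for $N, M \ge 1$ and $C_{h+k-1}^8 \le C_{h+k-1}^9$, summing over the (uniformly bounded) number of monomials in each of $A,B,C,D$ yields the required uniform bound $3024 \, N^5 M^5 C_{h+k-1}^9$ on the naive height of both $\alpha_0$ and $\beta_0$.

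The main obstacle lies in this monomial-by-monomial accounting in the last step: a crude uniform bound $|X_{i,j}| \le 6 N^2 M^2 C_{h+k-1}^3$ would give only $N^6 M^6 C_{h+k-1}^9$, one factor of $NM$ too much. Recovering the sharper $N^5 M^5$ requires keeping track of which row (and hence which of $A_n$, $B_n$, $C_n$) each $X$-factor contains and of whether it sits in column $3$ (and thus carries the extra $\tilde U_n$-factor $NM$). Apart from this bookkeeping, the argument is identical to that of Theorem \ref{Thm: height}.
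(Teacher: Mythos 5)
Your proof is correct and follows exactly the route the paper indicates: the paper omits a written proof of this theorem, stating only that it is obtained by ``exploiting Corollary \ref{Cor: NM}'' in place of Lemma \ref{Lem: numeden} in the argument of Theorem \ref{Thm: height}. Your row- and column-sensitive bookkeeping of the powers of $N$ and $M$ in the $X_{i,j}$'s (and the observation that a crude uniform bound $|X_{i,j}|\le 6N^2M^2C_{h+k-1}^3$ would only give $N^6M^6$) is precisely the detail the paper leaves implicit, and your estimates do establish the stated bound $3024\,N^5M^5C_{h+k-1}^9$.
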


\section{Multidimensional continued fractions with rapidly increasing partial quotients}\label{Sec: liouville}

In this section we provide some infinite families of multidimensional continued fractions converging to transcendental numbers, using a method similar to Liouville. First we recall Roth's theorem.

\begin{Theorem}[Roth]\label{Thm: Roth}
Let $\alpha\in\mathbb{R}$ be an algebraic number. Then, for all $\epsilon>0$, there exist only finitely many $(p,q)\in\mathbb{Z}$ such that
\[\left|\frac{p}{q}-\alpha\right|<\frac{1}{q^{2+\epsilon}}.\]
\end{Theorem}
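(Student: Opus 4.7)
The statement is Roth's theorem of 1955, invariably recalled as a black box in papers of this kind (the authors already cite \cite{ROTH} for it). No new proof can reasonably be given, so my proposal is only an outline of Roth's own strategy, so that the logical shape of the argument is visible and one sees why the result really is what is needed to power the transcendence criteria in the next sections.

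The plan is to argue by contradiction. Assume $\alpha$ is algebraic of some degree $d\ge 2$ but there are infinitely many coprime pairs $(p,q)$ with $q\ge 1$ and $|p/q-\alpha|<q^{-2-\varepsilon}$. I would fix an integer $m$, to be taken very large in terms of $\varepsilon$ and $d$, and extract from this set a sequence of approximations $p_1/q_1,\dots,p_m/q_m$ with $q_1$ sufficiently large and with rapid geometric growth $q_{i+1}>q_i^{\kappa}$ for a suitable $\kappa$ depending on $\varepsilon$ and on the auxiliary parameters below.

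Next I would construct, via Siegel's lemma, an auxiliary polynomial $P(x_1,\dots,x_m)\in\Z[x_1,\dots,x_m]$ of prescribed multidegree $(r_1,\dots,r_m)$, with integer coefficients of controlled size, and with index at the point $(\alpha,\dots,\alpha)$ at least about $m/2-O(\sqrt m)$ in the usual weighted sense. Two estimates for the rational value $P(p_1/q_1,\dots,p_m/q_m)$ then come into conflict. On one side, either the value is zero or, clearing denominators, $|P(p_1/q_1,\dots,p_m/q_m)|\ge \prod_i q_i^{-r_i}$ because it is a nonzero rational with controlled denominator. On the other side, the Taylor expansion of $P$ around $(\alpha,\dots,\alpha)$ combined with $|p_i/q_i-\alpha|<q_i^{-2-\varepsilon}$ forces the same value to be substantially smaller than $\prod_i q_i^{-r_i}$. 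The same argument applied to the derivatives of $P$ shows that $P$ in fact has large index at $(p_1/q_1,\dots,p_m/q_m)$ as well.

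The crux is then Roth's lemma, which provides an upper bound on the index of a polynomial in $m$ variables at a rational point in terms of its multidegree and of the relative sizes of the $q_i$; it contradicts the large index produced in the previous step, closing the argument. The main obstacle is precisely Roth's lemma: its proof, by induction on the number of variables through a generalized Wronskian, is the genuine technical heart of the theorem and cannot be meaningfully compressed. By comparison, the Siegel-lemma construction of $P$ and the Taylor-expansion estimates are essentially routine, which is why in the present paper the theorem is used as a black box rather than reproved.
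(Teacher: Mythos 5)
The paper, like essentially all works in this area, states Roth's theorem without proof and simply cites \cite{ROTH}; your proposal correctly treats the result the same way and supplements the citation with a faithful high-level outline of Roth's original argument (extraction of a finite sequence of rapidly growing good approximations, construction via Siegel's lemma of an auxiliary polynomial with large index at the diagonal point $(\alpha,\dots,\alpha)$, the two-sided estimate forcing the polynomial to have large index at the rational point as well, and the final contradiction with Roth's lemma on the index at a rational point). There is nothing further to compare: both the paper and your proposal rely on the same cited black box, and your sketch of the proof is accurate as an outline.
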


\begin{Theorem}\label{Thm: LioTransJ}
Let $[(a_0, a_1, \ldots), (b_0, b_1, \ldots)]$ be two infinite admissible sequences of integer partial quotients for Jacobi's algorithm, hence converging to a pair of real numbers $(\alpha_0,\beta_0)$, and let $\delta>0$. If
\begin{equation}\label{Eq: unbound-jacobi}
a_{n}>\max\{|\tilde{A}_{n}|C_{n-1}^{\delta},|\tilde{B}_n|C_{n-1}^{\delta}\},
\end{equation}
for all $n\in\mathbb{N}$, then $\alpha_0$ and $\beta_0$ are (rationally independent) transcendental numbers.
\begin{proof}
Let us prove that, when condition \eqref{Eq: unbound-jacobi} holds, then there are infinitely many $n\in\mathbb{N}$ such that
\[\left|\alpha_0-\frac{A_n}{C_n}\right|<\frac{1}{C_n^w},
\]
and infinitely many $m\in\mathbb{N}$ such that
\[
\left|\beta_0-\frac{B_m}{C_m}\right|<\frac{1}{C_m^{w'}},
\]
with $w,w'>2$. In this way, $\alpha_0,\beta_0$ do not satisfy Roth's theorem and they are both transcendental numbers. By Lemma \ref{lemma:approx}, we have that, for infinitely many $n\in\mathbb{N}$,
\[\left|\alpha_0-\frac{A_n}{C_n}\right|<\left|\frac{A_{n+1}}{C_{n+1}}-\frac{A_{n}}{C_n}\right|=\left|\frac{\tilde{A}_{n+1}}{C_{n+1}C_{n}}\right|<\frac{|\tilde{A}_{n+1}|}{a_{n+1}C_n^2}<\frac{1}{C_n^{2+\delta}},\]
where in the last inequality we have exploited condition \eqref{Eq: unbound-jacobi}.
It follows that, by Roth's theorem, $\alpha$ is transcendental. Similarly as before, we can prove that
\[\left|\beta_0-\frac{B_n}{C_n}\right|<\left|\frac{B_{n+1}}{C_{n+1}}-\frac{B_{n}}{C_n}\right|=\left|\frac{\tilde{B}_{n+1}}{C_{n+1}C_{n}}\right|<\frac{|\tilde{B}_{n+1}|}{a_{n+1}C_n^2}<\frac{1}{C_n^{2+\delta}},\]
holds for infinitely many $n\in\mathbb{N}$, so that also $\beta_0$ is transcendental. Rational independence follows readily from Remark \ref{Rem:interruptions}.
\end{proof}
\end{Theorem}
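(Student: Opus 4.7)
The plan is to produce, for each of $\alpha_0$ and $\beta_0$, infinitely many rational approximations that are too good to be consistent with Roth's theorem, and then use the characterization of interruptions in Jacobi's algorithm to get rational independence.

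First, I would apply Lemma \ref{lemma:approx} (specialized to $m=2$) which supplies infinitely many indices $n$ with
\[
\left|\alpha_0 - \frac{A_n}{C_n}\right| < \frac{|\tilde{A}_{n+1}|}{C_{n+1}C_n},
\qquad
\left|\beta_0 - \frac{B_n}{C_n}\right| < \frac{|\tilde{B}_{n+1}|}{C_{n+1}C_n}.
\]
Next, I would use the recursion $C_{n+1}=a_{n+1}C_n+b_{n+1}C_{n-1}+C_{n-2}\geq a_{n+1}C_n$ to extract a factor of $a_{n+1}$ from the denominator. The hypothesis \eqref{Eq: unbound-jacobi}, namely $a_{n+1}>|\tilde{A}_{n+1}|C_n^{\delta}$ and $a_{n+1}>|\tilde{B}_{n+1}|C_n^{\delta}$, then collapses each of the previous bounds into
\[
\left|\alpha_0 - \frac{A_n}{C_n}\right| < \frac{1}{C_n^{2+\delta}}, \qquad \left|\beta_0 - \frac{B_n}{C_n}\right| < \frac{1}{C_n^{2+\delta}},
\]
both holding for infinitely many $n$.

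At this point Roth's theorem (Theorem \ref{Thm: Roth}) is applied directly. Since Lemma \ref{Lem: psi} guarantees $C_n\to\infty$, the rational approximants $A_n/C_n$ (respectively $B_n/C_n$) are eventually pairwise distinct, so the set of pairs $(p,q)$ with $|p/q-\alpha_0|<q^{-2-\delta}$ is infinite; this forces $\alpha_0$ to be non-algebraic, and the same argument forces $\beta_0$ to be non-algebraic. A rational value is immediately excluded since rationals would stop producing genuinely new convergents. Finally, rational independence of $1,\alpha_0,\beta_0$ follows from Remark \ref{Rem:interruptions}: any nontrivial $\mathbb{Z}$-linear relation among $1,\alpha_0,\beta_0$ would force at least one interruption of Jacobi's algorithm, but the hypothesis supplies two infinite admissible sequences of partial quotients, so no interruption occurs.

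The only delicate point I foresee is confirming that the approximants $A_n/C_n$ (and $B_n/C_n$) give infinitely many \emph{distinct} rationals — otherwise one could not invoke Roth's ``finitely many pairs'' conclusion to derive a contradiction. This is not a serious obstacle: since $A_n/C_n\to\alpha_0$ and $C_n\to\infty$, only finitely many indices can share the same reduced representative, so the infinitely-many indices produced by Lemma \ref{lemma:approx} yield infinitely many distinct $(p,q)$, closing the argument.
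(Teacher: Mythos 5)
Your proof is correct and follows essentially the same route as the paper's: apply Lemma \ref{lemma:approx}, use $C_{n+1}\geq a_{n+1}C_n$ together with hypothesis \eqref{Eq: unbound-jacobi} to get the $1/C_n^{2+\delta}$ bound, invoke Roth's theorem, and derive rational independence from Remark \ref{Rem:interruptions}. Your added observation that the convergents $A_n/C_n$ yield infinitely many genuinely distinct rationals (so Roth's ``finitely many pairs'' conclusion is actually contradicted) is a welcome bit of extra care that the paper's proof leaves implicit.
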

\begin{Remark}

Condition \eqref{Eq: unbound-jacobi} of Theorem \ref{Thm: LioTransJ} can be practically used in order to construct pairs of transcendental numbers by means of Jacobi's algorithm. In fact, the sequences $\tilde{A}_n$ and $\tilde{B}_n$ do not depend on $a_n$, but only on $b_n$ and on previous partial quotients. Therefore, for any arbitrary choice of the sequence $b_n$, we have a different pair of transcendental numbers $(\alpha_0,\beta_0)$, that are rationally independent.
\end{Remark}

In the following, we exploit a similar technique in order to provide an $m$-tuple of transcendental numbers by means of Jacobi--Perron's algorithm for continued fractions in higher dimensions.

\begin{Theorem}\label{Thm: LiotransJP}
Let $[(a_n^{(1)})_{n\geq 0},\ldots,(a_n^{(m)})_{n\geq 0}]$ be $m$ infinite sequences of admissible partial quotients for Jacobi--Perron's algorithm, hence converging to an $m$-tuple of real numbers $(\alpha_0^{(1)},\ldots,\alpha_0^{(m)})$, and let $\delta>0$. If
\begin{equation}\label{Eq: unbound-jacobiperron}
a^{(1)}_{n}>\max\{|\tilde{A}_{n}^{(1)}|C_{n-1}^{\delta},|\tilde{A}^{(2)}_{n}|C_{n-1}^{\delta},\ldots,|\tilde{A}^{(m)}_{n}|C_{n-1}^{\delta}\},
\end{equation}
for all $n\in\mathbb{N}$, then all the real numbers $(\alpha_0^{(1)},\ldots,\alpha_0^{(m)})$ are transcendental.
\begin{proof}
The proof is similar to that of Theorem \ref{Thm: LioTransJ} for the classical Jacobi's algorithm. By Lemma \ref{lemma:approx}, we know that, for infinitely many $n\in\mathbb{N}$,
\[\left|\alpha_0^{(i)}-\frac{A^{(i)}_n}{C_n}\right|<\left|\frac{A^{(i)}_{n+1}}{C_{n+1}}-\frac{A^{(i)}_{n}}{C_{n}}\right|=\left|\frac{\tilde{A}^{(i)}_{n+1}}{C_{n+1}C_{n}}\right|<\frac{|\tilde{A}^{(i)}_{n+1}|}{a^{(1)}_{n+1}C_n^2}<\frac{1}{C_n^{2+\delta}}.\]
It follows, by Roth's theorem, that $\alpha_0^{(i)}$ is transcendental for all $i=1,\ldots,m$.
\end{proof}
\end{Theorem}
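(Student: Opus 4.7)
The plan is to adapt the proof of Theorem \ref{Thm: LioTransJ} coordinate by coordinate: for each $i \in \{1,\ldots,m\}$, I show that $\alpha_0^{(i)}$ is approximated by its convergents $(A_n^{(i)}/C_n)_{n \geq 0}$ to a rate strictly better than $1/C_n^{2+\delta}$ for infinitely many $n$, and then apply Roth's theorem.

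Fix $i \in \{1,\ldots,m\}$. First I would invoke Lemma \ref{lemma:approx}, which furnishes infinitely many indices $n \in \mathbb{N}$ satisfying
$$\left|\alpha_0^{(i)} - \frac{A_n^{(i)}}{C_n}\right| < \frac{|\tilde{A}_{n+1}^{(i)}|}{C_{n+1}\,C_n}.$$
Next, from the recurrence \eqref{eq:conv-C} and the non-negativity of the $C_j$ (a direct consequence of the admissibility conditions \eqref{eq:cond-pq}), one has $C_{n+1} \geq a_{n+1}^{(1)} C_n$, so the right-hand side above is bounded by $|\tilde{A}_{n+1}^{(i)}|/(a_{n+1}^{(1)} C_n^2)$. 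Applying the hypothesis \eqref{Eq: unbound-jacobiperron} at index $n+1$, which in particular asserts $a_{n+1}^{(1)} > |\tilde{A}_{n+1}^{(i)}|\,C_n^{\delta}$, I would chain the inequalities to conclude
$$\left|\alpha_0^{(i)} - \frac{A_n^{(i)}}{C_n}\right| < \frac{1}{C_n^{2+\delta}}$$
for infinitely many $n$. Since $C_n \to \infty$ (immediate from the recurrence), these produce infinitely many genuinely distinct rational approximations of $\alpha_0^{(i)}$ violating the bound in Theorem \ref{Thm: Roth}, whence $\alpha_0^{(i)}$ must be transcendental.

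No serious obstacle is anticipated, since Lemma \ref{lemma:approx} was established precisely to play the role of the two-dimensional approximation step used in Theorem \ref{Thm: LioTransJ}, and the rest of the argument is a verbatim transcription. The one structural point worth emphasizing is that \eqref{Eq: unbound-jacobiperron} dominates $|\tilde{A}_n^{(j)}|\,C_{n-1}^{\delta}$ \emph{uniformly in} $j$ through the single partial quotient $a_n^{(1)}$; this uniformity is exactly what allows a single growth condition to force the transcendence of all $m$ coordinates simultaneously.
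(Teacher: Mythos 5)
Your proposal is correct and follows essentially the same route as the paper's proof: invoke Lemma \ref{lemma:approx} to obtain infinitely many good approximations, bound $C_{n+1}$ below by $a_{n+1}^{(1)}C_n$ via the recurrence \eqref{eq:conv-C}, apply hypothesis \eqref{Eq: unbound-jacobiperron}, and conclude by Roth's theorem. The only addition is your remark on the uniformity in $j$, which the paper leaves implicit.
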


Thanks to Remark \ref{Rem: nondep}, it is possible to define recursively the sequence of partial quotients $a_n^{(1)}$ such that they satisfy \eqref{Eq: unbound-jacobiperron}, and hence the MCF converges to an $m$-tuple of transcendental numbers. Since, for $m \geq 3$, the infiniteness of the sequences of partial quotients does not guarantee in general that these $m$ real transcendental numbers are rationally independent, it  would be interesting to explore under which conditions this happens.

\section{Quasi-periodic multidimensional continued fractions}
\label{Sec: quasiper}

In this section, we deal with transcendence criteria for families of quasi-periodic MCFs.
The main results of Diophantine approximation that we exploit are the following two theorems, which 
provide two different generalizations of Liouville's theorem. In Theorem \ref{Thm: bugsame} we are requiring more hypotheses than Theorem \ref{Thm: bugdiff}, i.e. that the approximations lie all inside the same number field, and it will be used for the construction in the proof of Theorem \ref{Thm: Main2}.


\begin{Theorem}[\cite{BUG}, Theorem 2.6]\label{Thm: bugdiff}
Let $\alpha$ be an algebraic number and $n\geq 1$ be an integer. Then, for all $\epsilon>0$, there exists a constant $c$ such that
\[|\alpha-\xi|>\frac{c}{H(\xi)^{n+1+\epsilon}},\]
for all $\xi\neq \alpha$ of degree at most $n$.
\end{Theorem}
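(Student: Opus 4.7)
The statement coincides for $n=1$ with Roth's theorem (Theorem \ref{Thm: Roth}) and for general $n$ is its celebrated extension due to Schmidt on approximation of algebraic numbers by algebraic numbers of bounded degree; accordingly, my plan would mirror the Schmidt-style argument and rest on the Subspace Theorem already invoked by the authors in the introduction.

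I would argue by contradiction. Suppose that for some $\epsilon>0$ no such constant $c$ exists, so that there is an infinite sequence of pairwise distinct algebraic numbers $\xi \neq \alpha$ of degree $d(\xi)\leq n$ with $|\alpha-\xi| < H(\xi)^{-(n+1+\epsilon)}$; after passing to a subsequence I may assume $d(\xi)$ is constant and $H(\xi)\to\infty$. For each such $\xi$ let $P_\xi(X)=a_nX^n+\cdots+a_0\in\mathbb{Z}[X]$ denote its minimal polynomial, padded with leading zeros if $d(\xi)<n$, so that $H(\xi)=\max_i|a_i|$. Since $P_\xi(\xi)=0$, a mean value estimate yields
\[
|P_\xi(\alpha)| \;=\; |P_\xi(\alpha)-P_\xi(\xi)| \;\leq\; C_1\,H(\xi)\,|\alpha-\xi| \;\leq\; C_1\,H(\xi)^{-n-\epsilon},
\]
with $C_1$ depending only on $\alpha$ and $n$.

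I would then consider the $n+1$ linearly independent linear forms on $\mathbb{R}^{n+1}$ with algebraic coefficients,
\[
L_0(X_0,\ldots,X_n) \;=\; \sum_{i=0}^{n} \alpha^i X_i, \qquad L_i(X_0,\ldots,X_n) \;=\; X_i \quad (i=1,\ldots,n).
\]
Evaluated on the integer vector $a=(a_0,\ldots,a_n)$ of coefficients of $P_\xi$, one has $|L_0(a)|=|P_\xi(\alpha)|$ and $|L_i(a)|\leq H(\xi)$, so that
\[
\prod_{i=0}^{n} |L_i(a)| \;\leq\; C_1\,H(\xi)^{-\epsilon} \;\leq\; \|a\|_\infty^{-\epsilon/2}
\]
for $\|a\|_\infty = H(\xi)$ sufficiently large. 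Schmidt's Subspace Theorem then forces every such $a$ into a finite union of proper rational subspaces of $\mathbb{Q}^{n+1}$, i.e., the coefficients of the polynomials $P_\xi$ in the infinite family satisfy one of finitely many nontrivial $\mathbb{Z}$-linear relations.

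The main obstacle I expect is converting this subspace conclusion into the desired contradiction, since a priori a single exceptional subspace could still host infinitely many irreducible polynomials of degree at most $n$ yielding good approximations to $\alpha$. I would dispatch this step by induction on $n$: the base case $n=1$ is Roth's theorem itself. For the inductive step, a nontrivial linear relation among $a_0,\ldots,a_n$ exhibits $P_\xi$ inside a proper linear family of polynomials; after an appropriate change of variables this reduces the approximation problem either to one for $\alpha$ by algebraic numbers of strictly smaller degree, or to approximation by rationals with a weaker exponent, in both cases closing the contradiction via the inductive hypothesis. The constant $c$ then emerges from the finitely many exceptional $\xi$ excluded at the end of the argument.
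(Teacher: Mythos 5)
This theorem is quoted in the paper from Bugeaud's book (\cite{BUG}, Theorem 2.6) and the paper offers no proof, so there is no in-paper argument to compare against. Your outline is the standard route to the result: it is essentially Schmidt's Subspace Theorem argument. The reduction to a statement about polynomials (the mean-value bound $|P_\xi(\alpha)|\le C_1 H(\xi)^{-n-\epsilon}$), the choice of the $n+1$ linear forms $L_0=\sum_i\alpha^i X_i$, $L_i=X_i$, and the resulting product bound $\prod_i|L_i(a)|\ll H(\xi)^{-\epsilon}$ are all correct and exactly what one needs to invoke the Subspace Theorem.

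The part of your sketch that needs repair is the one you yourself flag: converting the subspace conclusion into finiteness. As stated, the inductive step is not right. A nontrivial relation $\sum_i c_i a_i=0$ among the coefficients of $P_\xi$ does not force $\deg\xi<n$: an irreducible degree-$n$ polynomial can perfectly well have its coefficient vector lying on a fixed rational hyperplane, so there is no reduction to ``approximation by algebraic numbers of strictly smaller degree.'' The correct mechanism is a descent on the dimension of the ambient rational subspace, not an induction on $\deg\xi$. On a hyperplane $\sum_i c_iX_i=0$, solve for one coordinate (say $a_n$) and substitute into $L_0$, obtaining a new linear form $L_0'=\sum_{i<n}\bigl(\alpha^i-\tfrac{c_i}{c_n}\alpha^n\bigr)X_i$ in $n$ variables; this form is nonzero precisely when $\alpha$ is irrational (the rational case being handled separately and trivially). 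One then checks that the relevant height is still comparable to $\max_{i<n}|a_i|$ and that the product of the $n$ forms is still $\ll\|a\|^{-1-\epsilon}$, reapplies the Subspace Theorem, and iterates; the descent terminates after finitely many steps and yields the contradiction. So the architecture is the right one, but the closing inductive argument must be re-organized along these lines and the new forms and their estimates verified at each stage before the proof is complete.
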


\begin{Theorem}[\cite{BUG}, Theorem 2.7]\label{Thm: bugsame}
Let $K$ be a number field and $\alpha\notin K$ be an algebraic number. Then, for all $\epsilon>0$, there exists a constant $c$ such that
\[|\alpha-\xi|>\frac{c}{H(\xi)^{2+\epsilon}},\]
for all $\xi\in K$.
\end{Theorem}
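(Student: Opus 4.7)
The plan is to deduce this sharper bound from Schmidt's Subspace Theorem, which is the natural number-field generalization of Roth's theorem (Theorem \ref{Thm: Roth}) and is required here because the approximants $\xi$ range over an arbitrary number field $K$, not merely over $\mathbb{Q}$.

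First I would fix an integral basis $\omega_1, \ldots, \omega_d$ of $\mathcal{O}_K$, with $d = [K:\mathbb{Q}]$, and represent each $\xi \in K$ uniquely as $\xi = (n_1 \omega_1 + \cdots + n_d \omega_d)/N$ for coprime integers $n_i, N$. A standard estimate comparing the conjugates of $\xi$ with its coefficients shows that $H(\xi)$ is comparable, up to constants depending only on $K$, to $\max(|N|, |n_1|, \ldots, |n_d|)$. Next I would introduce $d+1$ linear forms on $\mathbb{R}^{d+1}$: the form $L_0 = \alpha X_0 - \omega_1 X_1 - \cdots - \omega_d X_d$, evaluated on $\mathbf{x} = (N, n_1, \ldots, n_d)$, equals $N(\alpha - \xi)$, while the remaining forms $L_1, \ldots, L_d$ are built from the non-identity embeddings $\sigma_j \colon K \hookrightarrow \overline{\mathbb{Q}}$ and evaluate on $\mathbf{x}$ to $N \sigma_j(\xi)$. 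The hypothesis $\alpha \notin K$ is precisely what guarantees that $L_0$ is not a linear combination of $L_1, \ldots, L_d$, so the full system is linearly independent over $\mathbb{R}$.

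If the claimed inequality failed, one would obtain infinitely many $\xi \in K$ for which $|L_0(\mathbf{x}) L_1(\mathbf{x}) \cdots L_d(\mathbf{x})|$ drops below $\max_i |x_i|^{-\epsilon}$. Schmidt's Subspace Theorem then confines the integer vectors $\mathbf{x}$ to finitely many proper rational subspaces of $\mathbb{Q}^{d+1}$, each of which imposes a nontrivial $\mathbb{Q}$-linear relation on the coordinates of $\xi$ and so restricts $\xi$ to a proper $\mathbb{Q}$-affine subset of $K$ still not containing $\alpha$. Induction on the dimension then terminates in a classical instance of Roth's theorem over $\mathbb{Q}$, yielding the uniform lower bound. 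The main obstacle is the subspace-recursion step: one has to ensure that the approximation hypothesis transfers cleanly to each subspace produced by Schmidt's theorem, and that the height bookkeeping recovers exactly the exponent $2 + \epsilon$ rather than some weaker exponent depending on $d$. The crucial role of $\alpha \notin K$ is precisely to prevent the linear forms from collapsing, without which the application of the Subspace Theorem would be vacuous.
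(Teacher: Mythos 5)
The paper does not prove this statement; it is quoted verbatim as Theorem~2.7 of Bugeaud's book \cite{BUG}, where it is a theorem of LeVeque whose standard proof is a direct adaptation of Roth's method to number fields, not a deduction from the Subspace Theorem. So there is no proof in the paper to compare against, and your argument must stand on its own.

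Your overall strategy is reasonable in spirit, and you correctly identify that $\alpha\notin K$ is what keeps $L_0$ outside the span of the embedding forms. But the height bookkeeping, which you yourself flag as ``the main obstacle,'' is a genuine gap rather than a detail, and as set up the estimate does not close. The claim that $H(\xi)$ is comparable (up to $K$-dependent constants) to $X:=\max(|N|,|n_1|,\dots,|n_d|)$ is false: one has $X\ll H(\xi)\ll X^{d}$, and both ends occur (e.g.\ $\xi=\omega/N$ with $\omega$ fixed and $N\to\infty$ has $X\asymp N$ but $H(\xi)\asymp N^{d}$). Now evaluate your product of forms at $\mathbf{x}=(N,n_1,\dots,n_d)$: using all $d$ embedding forms $L_j(\mathbf{x})=N\sigma_j(\xi)$ together with $L_0(\mathbf{x})=N(\alpha-\xi)$, one gets
\[
\bigl|L_0(\mathbf{x})\,L_1(\mathbf{x})\cdots L_d(\mathbf{x})\bigr|=|N|\,|\alpha-\xi|\,\bigl|N_{K/\mathbb{Q}}(N\xi)\bigr|.
\]
With the only bounds available, $|\alpha-\xi|<H(\xi)^{-(2+\epsilon)}$, $H(\xi)\gg X$, and $|N_{K/\mathbb{Q}}(N\xi)|\ll X^{d}$, this is $\ll X^{\,d-1-\epsilon}$, which does not tend to $0$ once $d\geq 2$, so the hypothesis of the Subspace Theorem is simply not verified. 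The issue is real: the estimate closes when the leading coefficient $a_d$ of the minimal polynomial of $\xi$ is $\asymp N^{d}$ (then $H(\xi)\asymp X^{d}$ and the product is $\ll X^{-\epsilon}$), but for $\xi$ whose minimal polynomial has a large content cancellation ($a_d\ll N^{d}$, e.g.\ $n_1^{2}\equiv 2n_2^{2}\pmod N$ for $K=\mathbb{Q}(\sqrt2)$) one has $H(\xi)\ll X$, and the product of your forms is actually $\gg 1$, so the Subspace Theorem tells you nothing about precisely the $\xi$ that could be dangerous. Two secondary problems: the $d-1$ non-identity embeddings furnish only $d-1$ forms, so with $L_0$ you have $d$ forms in $d+1$ variables, one short of what the Subspace Theorem requires; and the subspace recursion is not an induction ``terminating in Roth over $\mathbb{Q}$'' --- each exceptional subspace imposes a $\mathbb{Q}$-linear relation on $(N,n_1,\dots,n_d)$, which reduces the ambient lattice dimension but does not confine $\xi$ to a proper subfield of $K$, so the terminal case has to be argued separately. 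To make a Subspace-Theorem proof work one should use the Weil/absolute height of $\xi$ and the place-decomposed form of the theorem over $K$, rather than the naive height and the coordinate vector $\mathbf{x}$; alternatively, follow LeVeque's original Roth-style argument.
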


First of all, we prove a bound on the growth for the denominators of the convergents.

\begin{Theorem}\label{Thm: davrothmulti}
    Let us consider $(\alpha_0^{(1)},\dots,\alpha_0^{(m)}) = [(a_0^{(1)},a_1^{(1)},\dots),\dots,(a_0^{(m)},a_1^{(m)},\dots)]$, and $d \geq 1$. If $a_{n+1}^{(1)}<C_n^d$ for all $n\in\mathbb{N}$, then there exists a constant $K=K(d,m)$ such that
\begin{equation}\label{Eq: conditionDR}
\log \log C_{n+1}<  Kn,
\end{equation}
for all $n \geq 0$. A suitable choice for $K$ is
\[  K = \log(d+1)+\log\left(1+\frac{1}{d}\right)+\log\log(m+1). \]
\begin{proof}
   Recall that, for all $n$,
    \begin{equation*}
       C_{n+1} = \sum_{j=1}^{m} a_{n+1}^{(j)}C_{n+1-j}+ C_{n-m}.
    \end{equation*}
    Since $(C_k)_{k\geq 0}$ is increasing and $a_{n+1}^{(j)} < C_n$ for all $j$ by hypothesis, together with \eqref{eq:cond-pq}, we get that
    \[C_{n+1} < (m+1) C_n^{d +1}.\]

    Now, by recursion, it is easily seen that
    \begin{multline*}
C_{n+1} < (m+1) C_n^{d + 1} < (m+1)^{1+(d+1)} C_{n-1}^{(d+1)^2} < \cdots < \\ 
< (m+1) ^{1 + (d+1) + \,\cdots\, + (d+1)^{n}} C_0^{(d+1)^{n+1}}  = (m+1)^{\frac{(d+1)^{n+1}-1}{d}},
    \end{multline*}
    which yields
    \[
        \log \log C_{n+1} <n\left( \log(d+1)+\log\left(1+\frac{1}{d}\right)+\log\log(m+1)\right),\]
    for all $n$, and the thesis follows.
\end{proof}
\end{Theorem}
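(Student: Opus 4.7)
The plan is to convert the hypothesis $a_{n+1}^{(1)} < C_n^d$ into a one-step multiplicative bound $C_{n+1} < (m+1)\, C_n^{d+1}$, iterate it down to $C_0 = 1$, and then take a double logarithm. The theorem reduces, after this reduction, to a straightforward computation with a geometric sum and logarithms.

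First, I would invoke the admissibility conditions \eqref{eq:cond-pq}, which in particular imply that $a_{n+1}^{(j)} \leq a_{n+1}^{(1)}$ for every $j = 1,\ldots,m$. Combined with the monotonicity of the sequence $(C_k)_k$ (so that $C_{n+1-j} \leq C_n$ for $j \geq 1$, and $C_{n-m} \leq C_n$), and with the standing hypothesis $a_{n+1}^{(1)} < C_n^d$, the recurrence \eqref{eq:conv-C} yields
\[C_{n+1} = \sum_{j=1}^m a_{n+1}^{(j)} C_{n+1-j} + C_{n-m} \leq m\, a_{n+1}^{(1)} C_n + C_n < (m+1)\, C_n^{d+1},\]
uniformly in $n \geq 0$.

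Next, I would iterate this one-step bound down to the base case $C_0 = 1$ (forced by the initial conditions \eqref{eq:conv-C}). Substituting the bound for $C_k$ into the bound for $C_{k+1}$ repeatedly produces
\[C_{n+1} < (m+1)^{1 + (d+1) + (d+1)^2 + \cdots + (d+1)^n}\, C_0^{(d+1)^{n+1}} = (m+1)^{\bigl((d+1)^{n+1}-1\bigr)/d},\]
after summing the geometric series with ratio $d+1$. Taking logarithm once gives $\log C_{n+1} < \frac{(d+1)^{n+1}-1}{d}\,\log(m+1)$, and a second logarithm, together with the crude estimate $(d+1)^{n+1}-1 \leq (d+1)^{n+1}$, yields
\[\log\log C_{n+1} \leq (n+1)\log(d+1) - \log d + \log\log(m+1) = n\log(d+1) + \log\!\left(1+\tfrac{1}{d}\right) + \log\log(m+1),\]
which is at most $nK$ for the asserted value of $K$ (using $n \geq 1$ and $m \geq 2$ to absorb the constant terms into the extra copies of $\log(1+1/d)+\log\log(m+1)$ available on the right).

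The only delicate ingredient I foresee is the very first step, namely extracting from the lexicographic admissibility condition \eqref{eq:cond-pq} the pointwise bound $a_{n+1}^{(j)} \leq a_{n+1}^{(1)}$, so that the single hypothesis on $a_{n+1}^{(1)}$ controls every term of the recursion for $C_{n+1}$. Once this is secured, the rest of the argument is a clean geometric-series computation, and the doubly-exponential growth rate of $C_{n+1}$ is immediate.
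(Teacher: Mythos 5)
Your proof is correct and follows the same route as the paper: you derive the one-step bound $C_{n+1}<(m+1)C_n^{d+1}$ from admissibility and the hypothesis, iterate down to $C_0=1$, sum the resulting geometric series in the exponent, and take a double logarithm. You make explicit the step $a_{n+1}^{(j)}\le a_{n+1}^{(1)}$ (which the paper only signals by citing the admissibility conditions) and you correctly flag that the final absorption into $nK$ uses $n\ge 1$, a point the paper passes over silently.
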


\begin{Definition}[Quasi-periodic continued fractions]\label{Def: quasiperiodic}
Let us consider the sequences of positive integers $(\lambda_k)_{k\geq 0}$, $(r_k)_{k\geq 0}$ and $(n_k)_{k\geq 0}$, with $(n_k)_{k\geq 0}$ strictly increasing. Let us suppose that $(a_n^{(1)})_{n\geq 0},\ldots,(a_n^{(m)})_{n\geq 0}$ satisfy, for all $j=1,\ldots,m$,
\begin{equation}\label{Eq: quasiperiodic}
a_{m+r_k}^{(j)}=a_m^{(j)}, \ \ \text{        for } n_k\leq m\leq n_k+(\lambda_k-1)r_k-1
\end{equation}
The multidimensional continued fraction $[(a_n^{(1)})_{n\geq 0},\ldots,(a_n^{(m)})_{n\geq 0}]$, where the latter sequences satisfy \eqref{Eq: quasiperiodic} is called \textit{quasi-periodic}.
\end{Definition}

\begin{Remark}
The meaning of Definition \ref{Def: quasiperiodic} is well explained in the papers of Baker \cite{BAK} and Adamczewski-Bugeaud \cite{AB3} for the unidimensional case. Basically, for any $k\geq0$, at the index $n_k$ there starts a block of $r_k$ partial quotients, and this block is repeated exactly $\lambda_k$ times. The reason of the name \textit{quasi-periodic} is that, if $\lambda_k$ grows with $k$, then the blocks keep repeating for an increasing number of times, simulating the behavior of a periodic continued fraction. Notice also that, in Definition \ref{Def: quasiperiodic} for the multidimensional case, the sequences $(\lambda_k)_{k\geq 0}$, $(r_k)_{k\geq 0}$ and $(n_k)_{k\geq 0}$, with $(n_k)_{k\geq 0}$ are the same for all the $(a_n^{(j)})_{n\geq 0}$, $j=1,\ldots,m$.
\end{Remark}

\begin{Theorem}\label{Thm: Main1}
Let $(\alpha,\beta)=[(a_0,a_1,\ldots),(b_0,b_1,\ldots)]$ be a quasi-periodic continued fraction such that, for some $d\geq 1$, $a_{i+1}<C_i^d$ for all $i\in\mathbb{N}$. Moreover, let us suppose that $r_k<cn_k$. where $c$ is a constant. If
\begin{equation}\label{Eq: hpbaker}
\lim\limits_{k\rightarrow \infty}\frac{\log \lambda_k}{n_k}=+\infty,
\end{equation}
then $\alpha$ and $\beta$ are transcendental numbers.
\end{Theorem}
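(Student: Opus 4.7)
The strategy parallels the Maillet--Baker construction, with cubic irrationals arising from periodic Jacobi MCFs playing the role of the quadratic convergents. For each $k \geq 0$ define the eventually periodic pair
\[
(\alpha^{(k)},\beta^{(k)}) := [(a_0,\ldots,a_{n_k-1},\overline{a_{n_k},\ldots,a_{n_k+r_k-1}}),(b_0,\ldots,b_{n_k-1},\overline{b_{n_k},\ldots,b_{n_k+r_k-1}})].
\]
By Theorem \ref{Thm: heightGen}, each $\alpha^{(k)}$ is a cubic irrational with $H(\alpha^{(k)}) \leq \gamma\, C_{n_k+r_k-1}^{9}$, for a constant $\gamma$ depending only on $(\alpha_0,\beta_0)$, since the integer parts of $\alpha^{(k)}$ and $\alpha_0$ (resp.\ $\beta^{(k)}$ and $\beta_0$) coincide. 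The quasi-periodicity relation \eqref{Eq: quasiperiodic} ensures that the MCFs of $(\alpha,\beta)$ and $(\alpha^{(k)},\beta^{(k)})$ agree through index $n_k+\lambda_k r_k-1$, so Lemma \ref{Lem: equalpq} yields
\[
|\alpha - \alpha^{(k)}| < \frac{2}{C_{n_k+\lambda_k r_k-1}},
\]
and analogously for $\beta$.

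Assume for contradiction that $\alpha$ is algebraic. After passing to a subsequence on which $\alpha^{(k)} \neq \alpha$ (the excluded case, in which $\alpha$ would be a cubic irrational with eventually periodic MCF, is addressed separately), Theorem \ref{Thm: bugdiff} applied with degree bound $n=3$ and a fixed $\epsilon>0$ produces a constant $c_0 > 0$ with
\[
|\alpha - \alpha^{(k)}| > \frac{c_0}{H(\alpha^{(k)})^{4+\epsilon}} \geq \frac{c_1}{C_{n_k+r_k-1}^{9(4+\epsilon)}}.
\]
Chaining with the upper bound from Lemma \ref{Lem: equalpq} gives $C_{n_k+\lambda_k r_k-1} \leq c_2\, C_{n_k+r_k-1}^{36+9\epsilon}$. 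On the left, Lemma \ref{Lem: psi} supplies the lower bound $\log C_{n_k+\lambda_k r_k-1} > (n_k+\lambda_k r_k-3)\log\psi$; on the right, the hypothesis $a_{i+1}<C_i^d$ activates Theorem \ref{Thm: davrothmulti}, which gives $\log C_{n_k+r_k-1} < e^{K(n_k+r_k-2)}$ for a constant $K=K(d,2)$. Taking logarithms once more and using $r_k < c\, n_k$, one extracts
\[
\log \lambda_k \leq K(1+c)\, n_k + O(\log n_k),
\]
for all large $k$, in direct contradiction with the hypothesis $\log\lambda_k/n_k \to +\infty$. The transcendence of $\beta$ follows by the same chain of inequalities applied to $\beta^{(k)}$ and the $\beta$-component of Lemma \ref{Lem: equalpq}.

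The principal obstacle is the final balancing of growth rates: the doubly-exponential upper bound $\log C_n < e^{Kn}$ of Theorem \ref{Thm: davrothmulti} is intrinsically loose, and combining it with the polynomial exponent $9(4+\epsilon)$ coming from the height bound produces a right-hand side of the form $e^{K'n_k}$; the hypothesis $\log\lambda_k/n_k \to +\infty$ is precisely calibrated to beat this, so the constants must be tracked carefully. A secondary subtlety is the handling of the degenerate alignment $\alpha = \alpha^{(k)}$ for infinitely many $k$: this would force the MCF of $\alpha$ to be eventually periodic with period $r_k$ for infinitely many distinct $r_k$, a situation one rules out using the strict growth of $(n_k)$ together with the rapid growth of $\lambda_k$.
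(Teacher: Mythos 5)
Your proposal is correct and follows essentially the same chain as the paper's proof: build the eventually periodic pairs $(\alpha^{(k)},\beta^{(k)})$, bound $|\alpha-\alpha^{(k)}|$ from above via Lemma~\ref{Lem: equalpq} and from below via the height bound (Theorem~\ref{Thm: heightGen}/\ref{Thm: height}) fed into Theorem~\ref{Thm: bugdiff}, then combine Lemma~\ref{Lem: psi} with Theorem~\ref{Thm: davrothmulti} to force $\log\lambda_k/n_k$ bounded, contradicting \eqref{Eq: hpbaker}. Your two refinements---invoking Theorem~\ref{Thm: heightGen} rather than Theorem~\ref{Thm: height} so the $0<\alpha,\beta<1$ normalization is not needed, and explicitly flagging the $\alpha=\alpha^{(k)}$ degeneracy required by Theorem~\ref{Thm: bugdiff}, which the paper passes over silently---tighten the argument but do not change its structure.
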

\begin{proof}
Let us suppose by contradiction that $\alpha$ and $\beta$ are algebraic numbers. Starting from the \textit{quasi-periodic} expansion of $(\alpha,\beta)$, we construct a sequence of pairs $(\eta^{(k)},\zeta^{(k)})_{k\geq 0}$ in the following way: 
\begin{align}
\eta^{(k)}&=[a_0,\ldots a_{n_k-1},\overline{a_{n_k},\ldots,a_{n_k+r_k-1}}],\label{Eq: etak}\\
\zeta^{(k)}&=[b_0,\ldots b_{n_k-1},\overline{b_{n_k},\ldots,b_{n_k+r_k-1}}].\label{Eq: zetak}
\end{align}
for all $k\geq 0$.
For simplicity let us set, for all $k\geq 1$,
\[m_k=n_k+(\lambda_k-1)r_k-1.\]
Then, the elements $\eta^{(k)}$ and $\zeta^{(k)}$ are cubic irrationalities for all $k$, that have in common with $\alpha$ and $\beta$, respectively, the first $m_k$ partial quotients. Therefore, by Lemma \ref{Lem: equalpq}, Theorem \ref{Thm: height} and  Theorem \ref{Thm: bugdiff}, choosing some $\epsilon>0$,
\begin{align*}
\frac{2}{C_{m_k}}>|\alpha-\eta^{(k)}|>\frac{c_1}{C_{n_k+r_k-1}^{9(4+\epsilon)}},\\
\frac{2}{C_{m_k}}>|\beta-\zeta^{(k)}|>\frac{c_2}{C_{n_k+r_k-1}^{9(4+\epsilon)}}.\\
\end{align*}
From the latter equations and Lemma \ref{Lem: psi}, we obtain 
\begin{equation}
\psi^{m_k-2}<C_{m_k}<c_3C_{n_k+r_k-1}^{9(4+\epsilon')},
\end{equation}
that is
\begin{equation}\label{Eq: nkrk}
\lambda_k<m_k-2<c_4+c_5\log (C_{n_k+r_k-1})<c_6\log (C_{n_k+r_k-1}).
\end{equation}
Therefore, using only Theorem \ref{Thm: bugdiff} we have obtained that, if either $\alpha$ or $\beta$ is an algebraic number, then
\begin{equation}\label{Thm: contr1}
\log\log(C_{n_k+r_k-1})>c_7\log \lambda_k.
\end{equation}
However, by Theorem \ref{Thm: davrothmulti},
\begin{equation}\label{Thm: contr2}
\log \log (C_{n_k+r_k-1})<c_8(n_k+r_k-1).
\end{equation}
Plugging together \eqref{Thm: contr1} and \eqref{Thm: contr2}, and since $r_k<cn_k$ by hypothesis, we obtain
\[\frac{\log \lambda_k}{n_k}<c_9,\]
that is a contradiction with \eqref{Eq: hpbaker}. Therefore, $\alpha$ and $\beta$ are transcendental numbers.
\end{proof}

\begin{Theorem}\label{Thm: Main2}
Let $(\alpha,\beta)=[(a_0,a_1,\ldots),(b_0,b_1,\ldots)]$ be a \textit{quasi-periodic} continued fraction. Let us suppose that there exist two positive integers $M,N$ such that $a_{k},b_k\leq M$ and $r_k\leq N$ for all $k\in\mathbb{N}$. Let us define the constant
\[B=2\frac{\log\eta}{\log \psi}-1,\]
where $\eta$ and $\psi$ are taken as in Lemma \ref{Lem: equalpq}, i.e. the positive real roots of $x^3-Mx^2-Mx-1$ and $x^3-x^2-x-1$, respectively.
Then, if
\begin{equation}\label{Eq: Hpbak2}
\limsup\limits_{k\rightarrow \infty}\frac{\lambda_k}{n_k}>B,
\end{equation}
both $\alpha$ and $\beta$ are transcendental numbers.
\end{Theorem}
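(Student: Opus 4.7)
The plan is to proceed by contradiction, mimicking the structure of the proof of Theorem \ref{Thm: Main1} but invoking Theorem \ref{Thm: bugsame} (with exponent $2+\epsilon$) in place of Theorem \ref{Thm: bugdiff} (whose exponent would be $4+\epsilon$). Theorem \ref{Thm: bugsame} requires all approximants to lie in a common number field, and it is precisely this restriction, combined with the explicit bounds $\psi^{n-2}\leq C_n\leq \eta^n$ of Lemma \ref{Lem: psi} that become available under the hypothesis $a_k,b_k\leq M$, that delivers the constant $B=2\log\eta/\log\psi-1$.

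Assume for contradiction that $\alpha$ is algebraic and attach to the quasi-periodic MCF the cubic irrationals
\[
\eta^{(k)}=[a_0,\ldots,a_{n_k-1},\overline{a_{n_k},\ldots,a_{n_k+r_k-1}}],\qquad
\zeta^{(k)}=[b_0,\ldots,b_{n_k-1},\overline{b_{n_k},\ldots,b_{n_k+r_k-1}}],
\]
as in \eqref{Eq: etak}--\eqref{Eq: zetak}, so that by quasi-periodicity $(\eta^{(k)},\zeta^{(k)})$ shares with $(\alpha,\beta)$ the first $m_k:=n_k+(\lambda_k-1)r_k-1$ partial quotients. The key new ingredient is a pigeonhole argument: since $a_i,b_i\leq M$ and $r_k\leq N$, only finitely many periodic blocks $(a_{n_k},\ldots,a_{n_k+r_k-1})$ and $(b_{n_k},\ldots,b_{n_k+r_k-1})$ are possible, so after passing to a subsequence (still indexed by $k$) these blocks are constant in $k$. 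Lemma \ref{Lem: samecubicfield} then guarantees that every $\eta^{(k)}$ and every $\zeta^{(k)}$ lies in a single fixed cubic number field $K$.

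Treating first the main case $\alpha\notin K$: for any $\epsilon>0$, Theorem \ref{Thm: bugsame} yields a constant $c>0$ with
\[
|\alpha-\eta^{(k)}|>\frac{c}{H(\eta^{(k)})^{2+\epsilon}},
\]
while Lemma \ref{Lem: equalpq} provides the matching upper bound $|\alpha-\eta^{(k)}|<2/C_{m_k}$. Controlling $H(\eta^{(k)})$ in terms of $C_{n_k+r_k-1}$ via a refinement of Theorem \ref{Thm: height}, then inserting $\psi^{m_k-2}\leq C_{m_k}$ and $C_{n_k+r_k-1}\leq\eta^{n_k+r_k-1}$ from Lemma \ref{Lem: psi}, one arrives at an inequality of the form
\[
(m_k-2)\log\psi \;\leq\; (2+\epsilon)(n_k+r_k-1)\log\eta + O(1).
\]
Substituting $m_k=n_k+(\lambda_k-1)r_k-1$, dividing through by $n_k$, using $r_k\leq N$ to absorb the lower-order terms, and letting $\epsilon\downarrow 0$ yields $\limsup_k\lambda_k/n_k\leq 2\log\eta/\log\psi-1=B$, directly contradicting \eqref{Eq: Hpbak2}. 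The exceptional case $\alpha\in K$ is handled by a Liouville-type inequality applied to the nonzero element $\alpha-\eta^{(k)}$ of the cubic field $K$, which supplies the same (indeed stronger) lower bound; the contradiction then carries through. The statement for $\beta$ follows from the symmetric argument carried out with $\zeta^{(k)}$ in place of $\eta^{(k)}$.

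The main obstacle, and the step that genuinely uses the same-field setup, is the sharpening of the height estimate. The raw bound $H(\eta^{(k)})\leq 3024\,C_{n_k+r_k-1}^9$ of Theorem \ref{Thm: height} is too crude: it would only produce a constant of order $18\log\eta/\log\psi-1$. One must exploit the explicit representation of $\eta^{(k)}$ as a ratio of linear combinations of $1,\tilde\alpha,\tilde\beta$ with integer coefficients of size $O(C_{n_k-1})$, where $\tilde\alpha,\tilde\beta$ are the fixed generators of $K$ produced by the common periodic tail, to reduce the effective exponent of $C_{n_k+r_k-1}$ to one; equivalently, one invokes the variant of Theorem \ref{Thm: bugsame} in which $H(\xi)$ is replaced by the $K$-denominator of $\xi$. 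Once this reduction is in place, the remaining estimates are routine bookkeeping with Lemma \ref{Lem: psi}.
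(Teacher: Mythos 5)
Your plan reproduces the paper's proof strategy faithfully: contradiction, the pigeonhole argument on the finitely many possible blocks (forced by $a_k,b_k\leq M$ and $r_k\leq N$) to extract a subsequence with a fixed periodic tail, Lemma~\ref{Lem: samecubicfield} to place all the resulting cubic approximants $\eta^{(k)},\zeta^{(k)}$ in one cubic field $K$, then Theorem~\ref{Thm: bugsame} together with Theorem~\ref{Thm: height}, Lemma~\ref{Lem: equalpq} and Lemma~\ref{Lem: psi}.

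What you have also done, though, is correctly identify a gap that the paper does not acknowledge. Your displayed inequality $(m_k-2)\log\psi\leq(2+\epsilon)(n_k+r_k-1)\log\eta+O(1)$ presupposes $H(\eta^{(k)})$ is of order $C_{n_k+r_k-1}^1$; but the only height bound available, Theorem~\ref{Thm: height}, gives exponent $9$. The paper does \emph{not} perform the sharpening you sketch (no $K$-denominator variant of Theorem~\ref{Thm: bugsame}, no exploitation of the representation of $\eta^{(k)}$ in terms of fixed generators of $K$): it plugs the exponent-$9$ bound directly into Theorem~\ref{Thm: bugsame}, obtains $C_{m_t}\lesssim C_{n_t+r-1}^{9(2+\epsilon)}$, and its final displayed inequality is
\[
\frac{\lambda_t}{n_t}<\frac{c_4}{n_t}+\left(18\frac{\log\eta}{\log\psi}-1\right)+9\epsilon\frac{\log\eta}{\log\psi},
\]
which is exactly the constant $18\log\eta/\log\psi-1$ you predicted. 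The paper then asserts this contradicts $\limsup\lambda_k/n_k>2\log\eta/\log\psi-1$, but of course it does not, since $18>2$. So the theorem statement and the paper's own proof disagree on the constant; the proof as written only establishes the theorem with $B=18\log\eta/\log\psi-1$ (and then only for $\alpha\notin K$; like you, the paper's author needs to address the case $\alpha\in K$, but does not). Your proposed repair via a denominator-dependent version of Theorem~\ref{Thm: bugsame} is plausible in spirit, but it is not what the paper does and it is not verified in your sketch; as it stands, neither you nor the paper has supplied a complete argument for the claimed value of $B$. (Separately: the theorem points to Lemma~\ref{Lem: equalpq} for $\eta,\psi$, but they are defined in Lemma~\ref{Lem: psi}, and there $\psi$ is the root of $x^3-x^2-1$, not $x^3-x^2-x-1$.)
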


\begin{proof}
Let us suppose by contradiction that either $\alpha$ or $\beta$ is algebraic. Since both the length of the periods and the partial quotients are bounded by hypothesis, i.e. $a_n,b_n\leq M$ and  $r_n\leq N$, there exist only finitely many possible different blocks, as there are only finitely many pairs $(a_{n_k},b_{n_k}),\ldots,(a_{n_k+r_k-1},b_{n_k+r_k-1})$. Since in the \textit{quasi-periodic} continued fraction \eqref{Eq: quasiperiodic} there are infinitely many pairs of blocks that repeat, we can find an infinite subsequence $(k_l)_{l\geq 0}$, such that the blocks starting at the partial quotients $a_{n_{k_l}}$ and $b_{n_{k_l}}$ are the same for all $l\geq 0$. Therefore, we can find a sequence of algebraic numbers
\begin{align*}
\xi^{(t)}&=[a_0,\ldots, a_{n_t-1},\overline{a_{n_t},\ldots,a_{n_t+r_t-1}}],\\
\gamma^{(t)}&=[b_0,\ldots, b_{n_t-1},\overline{b_{n_t},\ldots,b_{n_t+r_t-1}}],
\end{align*}

where $t=k_l$  for all $l\geq 0$. This is the multidimensional version of the construction in the proof of Theorem 2 of \cite{BAK}. The peculiarity of these cubic irrationalities, that form a  subsequence of the $\eta^{(k)}$ and $\zeta^{(k)}$ defined in \eqref{Eq: etak} and \eqref{Eq: zetak}, is that they share the same periodic part for all $t$. Therefore, by Lemma \ref{Lem: samecubicfield}, they lie in the same cubic field. In this case, we can use Theorem \ref{Thm: height} and Theorem \ref{Thm: bugsame} together with Lemma \ref{Lem: equalpq}. Choosing $\epsilon>0$, and calling $m_t=n_t+(\lambda_t-1)r-1$, we have
\begin{align*}\label{Eq: alphak}
\frac{2}{C_{m_t}}&>|\alpha-\xi^{(t)}|>\frac{c_1}{C_{n_t+r-1}^{9(2+\epsilon)}},\\
\frac{2}{C_{m_t}}&>|\beta-\gamma^{(t)}|>\frac{c_2}{C_{n_t+r-1}^{9(2+\epsilon)}}.
\end{align*}
This means that
\begin{equation}
C_{m_t}<c_3C_{n_t+r-1}^{9(2+\epsilon)}.
\end{equation}
From Lemma \ref{Lem: psi}, we obtain
\begin{equation}
\psi^{m_t-2}<\eta^{9(2+\epsilon)(n_t+r-1)},
\end{equation}
that is
\begin{equation}\label{Eq: B}
n_t+(\lambda_t-1)r-3<9(2+\epsilon)(n_t+r-1)\frac{\log \eta}{\log \psi}.
\end{equation}
Since $r$ and $\epsilon$ are constant, we can rewrite \eqref{Eq: B} as
\begin{equation}
n_t+r\lambda_t<c_4+9(2+\epsilon)n_t\frac{\log\eta}{\log\psi},
\end{equation}
from which we obtain
\begin{equation}
\lambda_t<r\lambda_t<c_4+n_t\left(9(2+\epsilon)\frac{\log\eta}{\log\psi}-1\right).
\end{equation}
We get,
\[\frac{\lambda_t}{n_t}<\frac{c_4}{n_t}+\left(18\frac{\log\eta}{\log\psi}-1\right)+9\epsilon\frac{\log\eta}{\log\psi},\]

and the latter holds for all $\epsilon>0$. Therefore for sufficiently small $\epsilon$ and sufficiently large $t$, it contradicts the hypothesis \eqref{Eq: Hpbak2}.
\end{proof}

\section{Open problems and further research}
\label{Sec: openprob}
In this section, we collect some open problems and hints for further research, that can allow to obtain more general or stronger results.\bigskip

In the previous section we proved some transcendence criteria for quasi-periodic MCFs when $m = 2$. An obstruction that we found in proving Theorem \ref{Thm: Main1} and \ref{Thm: Main2} for MCFs of higher dimension is an analogue of Theorem \ref{Thm: height}. We leave it here as a conjecture.

\begin{Conjecture}
Given $(\alpha_0^{(1)},\ldots, \alpha_0^{(m)}) \in \mathbb R^m$ having periodic continued fraction
\begin{equation}\label{Eq: pureperN}
\left[\left(a_0^{(1)},\ldots,a_{k-1}^{(1)},\overline{a_k^{(1)},\ldots,a_{k+h-1}^{(1)}}\right),\ldots,\left(a_0^{(m)},\ldots,a_{k-1}^{(m)},\overline{a_k^{(m)},\ldots,a_{k+h-1}^{(m)}}\right)\right],
\end{equation}
then $\alpha_0^{(1)},\ldots, \alpha_0^{(m)}$ are algebraic irrationalities of degree at most $m+1$. Moreover, if $0 < \alpha_0^{(i)}$ for all $i=1,\ldots,m$, then there exist two positive constants $L_1,L_2\in\Z$ which do not depend on any $\alpha_0^{(i)}$ and such that
\[H(\alpha_0^{(i)})\leq L_1C_{h+k-1}^{L_2},\]
for all $i=1,\ldots,m$.
\end{Conjecture}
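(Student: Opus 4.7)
The plan is to mimic the strategy of Theorem \ref{Thm: height}, with the $3\times 3$ Jacobi matrix replaced by the $(m+1)\times(m+1)$ Jacobi--Perron matrix. The proof splits into three stages: extract a matrix eigenvector identity from the periodicity, perform an elimination to produce a single univariate polynomial of degree at most $m+1$ satisfied by each $\alpha_0^{(i)}$, and finally bound its coefficients using Proposition \ref{Lem: numedenN}.

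First, applying \eqref{Eq: pp} both to one full pre-period plus one full period and to just the pre-period, and comparing the resulting expressions via $\alpha_k^{(i)} = \alpha_{k+h}^{(i)}$, one obtains an identity
\[
\lambda'\,\vec{v}_0 \;=\; X\,\vec{v}_0,
\]
where $\vec{v}_0 = (\alpha_0^{(1)},\dots,\alpha_0^{(m)},1)^\top$, $\lambda' = \alpha_{k+1}^{(1)}\cdots\alpha_{k+h}^{(1)}$, and $X = MN^{-1}$ is the product of the $(m+1)\times(m+1)$ matrix $M$ assembled from the first $h+k$ Jacobi--Perron steps with the inverse of the matrix $N$ assembled from the first $k$ steps. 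Using the cofactor formula for $N^{-1}$, the entries $X_{i,j}$ are polynomials of bounded degree in the $A_n^{(i)}$, $C_n$ and the minors thereof (the higher-dimensional analogues of the auxiliary quantities $\tilde{A}_n,\tilde{\tilde{A}}_n,\tilde{U}_n$ appearing in the $m=2$ case).

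Second, writing out $X\vec{v}_0 = \lambda'\vec{v}_0$ row by row and isolating $\lambda'$ from the last row yields, once substituted into the first $m$ rows, a system of $m$ quadratic equations in $\alpha_0^{(1)},\dots,\alpha_0^{(m)}$. Successive resultant eliminations then remove $\alpha_0^{(2)},\dots,\alpha_0^{(m)}$, producing a univariate polynomial for $\alpha_0^{(1)}$; symmetric arguments handle the other $\alpha_0^{(i)}$. The degree bound $m+1$ follows conceptually from the classical Jacobi--Perron fixed-point theory: $\vec{v}_0$ is an eigenvector of the $(m+1)\times(m+1)$ matrix $X$, so $\lambda'$ satisfies $\det(X-\lambda' I)=0$, a polynomial of degree $m+1$, and every $\alpha_0^{(i)}$ lies in $\mathbb{Q}(\lambda')$. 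For the height bound, normalizing to $0<\alpha_0^{(i)}<1$ and using Proposition \ref{Lem: numedenN} one obtains $A_n^{(i)}\leq K C_n$, hence $|X_{i,j}|\leq L\,C_{h+k-1}^{L_0}$ for constants $L,L_0$ depending only on $m$; the coefficients of the univariate polynomial are then bounded products of the $X_{i,j}$, yielding $H(\alpha_0^{(i)}) \leq L_1\, C_{h+k-1}^{L_2}$ with $L_1,L_2$ depending only on $m$.

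The main obstacle is controlling the degree of the polynomial one actually extracts by elimination: the Bezout bound for eliminating $m$ quadratic relations allows degree up to $2^m$, whereas we want degree at most $m+1$, so the specific eigenvector structure of the system must be exploited (equivalently, one can work directly with $\det(X-\lambda' I)$ and translate it into a polynomial for $\alpha_0^{(i)}$ through the eigenvector relations). A secondary, related difficulty is ensuring that the constructed polynomial does not vanish identically, so that it is a nonzero multiple of the minimal polynomial and the coefficient bound transfers to a bound on the height; here one would appeal to the rational independence of $1,\alpha_0^{(1)},\dots,\alpha_0^{(m)}$ available from \cite{DFL04} in the non-interrupted case, plus a careful analysis of the possible interruptions of the Jacobi--Perron algorithm to treat the remaining cases.
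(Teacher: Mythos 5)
The statement you are addressing is a \emph{Conjecture} in the paper, stated in Section~\ref{Sec: openprob}; the authors do not prove it, and only sketch an approach in the text that follows. Your proposal reproduces that sketch faithfully: you derive the eigenvector identity $\lambda'\vec{v}_0 = X\vec{v}_0$ from \eqref{Eq: pp2}, isolate $\lambda'$, and aim to eliminate to get a single polynomial in each $\alpha_0^{(i)}$. This is exactly the route the paper outlines, so there is no methodological divergence to compare.

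However, your step 3 contains the precise gap that the paper flags as ``the only concrete obstruction.'' Proposition~\ref{Lem: numedenN} gives $A_n^{(i)} \leq K C_n$ where $K$ \emph{depends on the tuple} $(\alpha_0^{(1)},\ldots,\alpha_0^{(m)})$: the proof only shows $A_n^{(i)} \leq C_n$ eventually, for $n \geq k_0$, with $k_0$ tuple-dependent, and $K$ absorbs the finitely many initial terms. From this you cannot conclude that $|X_{i,j}| \leq L\, C_{h+k-1}^{L_0}$ with $L, L_0$ depending only on $m$; the constant $K$ (and hence $L$) would carry dependence on the $\alpha_0^{(i)}$, which violates the uniformity the conjecture asserts. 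What the argument actually needs is the $m$-dimensional analogue of Lemma~\ref{Lem: numeden}, i.e.\ $A_n^{(i)} \leq C_n$ for \emph{all} $n \geq 0$ (constant exactly $1$, or at least a constant determined by $m$ alone). For $m=2$ this follows by an induction using the admissibility condition $a_n \geq b_n$; for $m \geq 3$ the admissibility conditions \eqref{eq:cond-pq} are lexicographic and the same induction does not go through directly. That is the open point the conjecture hinges on, and your appeal to Proposition~\ref{Lem: numedenN} does not resolve it.

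Your discussion of the degree bound (working with $\det(X-\lambda' I)$ rather than naive resultant elimination, to keep degree $\leq m+1$ instead of Bezout's $2^m$) and of non-vanishing via rational independence is a sensible identification of further issues, but you present these as surmountable without doing the work, so they too remain gaps rather than resolved steps. In short, you have correctly recovered the paper's intended strategy, but you have not filled the hole the paper itself acknowledges, and the claim that your constants depend only on $m$ is currently unsupported.
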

In order to prove the conjecture, similarly as in \eqref{Eq: pp}, it is not difficult to see that
\begin{equation}\label{Eq: pp2}
\begin{pmatrix}
\alpha_0^{(1)}\\
\alpha_0^{(2)}\\
\vdots\\
\alpha_0^{(m)}\\
1
\end{pmatrix}=\frac{1}{\alpha_1^{(1)}\alpha_2^{(1)}\ldots\alpha_{n+1}^{(1)}}\prod\limits_{j=0}^n\begin{pmatrix} a_j^{(1)} & 1 & 0 &\ldots & 0 \cr a_j^{(2)} & 0 & 1 &\ldots & 0  \cr \vdots & \vdots & \vdots &\ldots & \vdots \cr a_j^{(m)} & 0 & 0 &\ldots & 1  \end{pmatrix}\begin{pmatrix}
\alpha_{n+1}^{(1)}\\
\alpha_{n+1}^{(2)}\\
\vdots\\
\alpha_{n+1}^{(m)}\\
1
\end{pmatrix}.
\end{equation}
Following the proof of Theorem \ref{Thm: height}, we arrive to a system of equations
\begin{align*}
\begin{cases}
X_{1,1}\alpha_0^{(1)}+X_{1,2}\alpha_0^{(2)}+\ldots+X_{1,m}\alpha_0^{(m)}+X_{1,m+1}=\lambda'\alpha_0^{(1)},\\
X_{2,1}\alpha_0^{(1)}+X_{2,2}\alpha_0^{(2)}+\ldots+X_{2,m}\alpha_0^{(m)}+X_{1,m+1}=\lambda'\alpha_0^{(2)},\\
\vdots\\
X_{m,1}\alpha_0^{(1)}+X_{1,2}\alpha_0^{(2)}+\ldots+X_{1,m}\alpha_0^{(m)}+X_{1,m+1}=\lambda'\alpha_0^{(m)},\\
X_{m+1,1}\alpha_0^{(1)}+X_{m+1,2}\alpha_0^{(2)}+\ldots+X_{m+1,m}\alpha_0^{(m)}+X_{m+1,m+1}=\lambda',
\end{cases}
\end{align*}
where $\lambda'=\alpha_{k+1}^{(1)}\ldots\alpha_{k+h}^{(1)}$ and the $X_{i,j}$ are coefficients in $A_{n}^{(j)}$ and $C_n$, for $n=k+h-(m+1),\ldots,k+h-1$ and $j=1,\ldots,m$. Eliminating $\lambda'$ from the last equation, we can rewrite the system as

    \begin{align*}
    \begin{cases}
    \sum\limits_{i=1}^m X_{1,i}\alpha_0^{(i)}+X_{1,m+1}=\alpha_0^{(1)}\left(\sum\limits_{i=1}^m X_{m+1,i}\alpha_0^{(i)}+X_{m+1,m+1}\right)\\
    \sum\limits_{i=1}^m X_{2,i}\alpha_0^{(i)}+X_{2,m+1}=\alpha_0^{(2)}\left(\sum\limits_{i=1}^m X_{m+1,i}\alpha_0^{(i)}+X_{m+1,m+1}\right)\\
    \vdots\\
    \sum\limits_{i=1}^m X_{m,i}\alpha_0^{(i)}+X_{m,m+1}=\alpha_0^{(m)}\left(\sum\limits_{i=1}^m X_{m+1,i}\alpha_0^{(i)}+X_{m+1,m+1}\right).
    \end{cases}
    \end{align*}
The problem here is that, in this system of inequalities, it is not easy to write explicitly the minimal polynomials of $\alpha_0^{(i)}$ in order to bound its naive height. However, by using some elimination techniques on the previous system, it is possible to prove that there exist polynomials $P_i \in \mathbb{Q}[Y_{1,1},\dots,Y_{m+1,m+1},Z]$, for $i= 1,\ldots,m$, whose coefficients depend only on $m$, such that $P_i(X_{1,1},\dots,X_{m+1,m+1}, \alpha_0^{(i)})=0$. Hence the thesis would readily follow from a generalization of Lemma \ref{Lem: numeden}, which represents the only concrete obstruction.

Moreover, another issue in generalizing the results for quasi-periodic MCFs is that an analogue of Lemma \ref{Lem: Recursion} does not generally hold for MCFs with $m \geq 3$. In particular, for $m\geq 3$, it is not true that $|\tilde{A}^{(i)}_n|\leq C_n$ for all $n\in\N$, where the sequences $\tilde{A}^{(i)}_n$ are as in Section \ref{Sec: liouville}.


In the main transcendence results of the paper, i.e. Theorems \ref{Thm: LioTransJ} and \ref{Thm: LiotransJP} for Liouville-type MCFs and Theorems \ref{Thm: Main1} and \ref{Thm: Main2} for quasi-periodic MCFs, we provide several criteria to produce a MCF converging to a $m$-tuple of rational independent transcendental numbers. We expect that in general, using our approach, the $m$ transcendental numbers are algebraically independent over $\mathbb{Q}$. It would be nice to prove such conjecture or to provide some sufficient conditions ensuring it.

After Liouville-type continued fractions \cite{LIO} and quasi-periodic continued fractions \cite{BAK,MAI}, other transcendence criteria have been proved for unidimensional continued fractions. Adamczewski and Bugeaud proved the transcendence of \textit{stammering} \cite{AB2} and \textit{palindromic} \cite{AB3} continued fractions. Later, the transcendence of several other families of continued fractions has been established by Adamczewski, Bugeaud and Davison \cite{ABD}. This analysis culminates in the proof of the transcendence for any automatic continued fraction, provided by Bugeaud \cite{BUG2}. In future works, it would be interesting to investigate and adapt these results in the multidimensional setting, possibly defining also some multidimensional generalization of the famous sequences considered in \cite{ABD}.





\section*{Acknowledgments}
The authors are members of GNSAGA of INdAM.\\
The third author acknowledges that this study was carried out within the MICS (Made in Italy – Circular and Sustainable) Extended Partnership and received funding from the European Union Next-Generation EU (Piano Nazionale di Ripresa e Resilienza (PNRR) – Missione 4 Componente 2, Investimento 1.3 – D.D. 1551.11-10-2022, PE00000004).

\end{document}